\newtheorem{thm}{Theorem}[section]
\newtheorem{lem}[thm]{Lemma}
\newtheorem{cor}[thm]{Corollary}
\newtheorem{conj}[thm]{Conjecture}
\newtheorem{ques}[thm]{Question}
\newcommand\ZZ{\mathbb{Z}}
\newcommand\RR{\mathbb{R}}
\DeclareMathOperator{\lcm}{lcm}
\newcommand\needed[1]{
%\textcolor{red}{\textbf{{#1}}}
}
\newcommand\sout{}
\begin{document}

\title{On tiling the integers with $4$-sets of the same gap sequence}

\author{
Ilkyoo Choi\thanks{Corresponding author.
Supported by the National Research Foundation of Korea (NRF) grant funded by the Korea government (MSIP) (NRF-2015R1C1A1A02036398).
Department of Mathematical Sciences, KAIST, Daejeon, Republic of Korea.
\texttt{ilkyoo@kaist.ac.kr}
}
\and
Junehyuk Jung\thanks{Department of Mathematical Sciences, KAIST, Daejeon, Republic of Korea.
\texttt{junehyuk@kaist.ac.kr}
}
\and
Minki Kim\thanks{Department of Mathematical Sciences, KAIST, Daejeon, Republic of Korea.
\texttt{kmk90@kaist.ac.kr}
}
}

\date\today

\maketitle

\begin{abstract}
Partitioning a set into similar, if not, identical, parts is a fundamental research topic in combinatorics.
The question of partitioning the integers in various ways has been considered throughout history.
Given a set $\{x_1, \ldots, x_n\}$ of integers where $x_1<\cdots<x_n$, let the {\it gap sequence} of this set be the nondecreasing sequence $d_1, \ldots, d_{n-1}$ where $\{d_1, \ldots, d_{n-1}\}$ equals $\{x_{i+1}-x_i:i\in\{1,\ldots, n-1\}\}$ as a multiset.
This paper addresses the following question, which was explicitly asked by Nakamigawa: can the set of integers be partitioned into sets with the same gap sequence?
The question is known to be true for any set where the gap sequence has length at most two.
This paper provides evidence that the question is true when the gap sequence has length three.
Namely, we prove that given positive integers $p$ and $q$, there is a positive integer $r_0$ such that for all $r\geq r_0$, the set of integers can be partitioned into $4$-sets with gap sequence $p, q$, $r$.
\end{abstract}

\maketitle

\section{Introduction}

Let $[n]$ denote the set $\{1, \ldots, n\}$ and let $[a, b]$ denote the set $\{a, \ldots, b\}$.
Note that $[1, 0]=\emptyset$.
An $n$-set is a set of size $n$.

Partitioning a set into similar, if not, identical, parts is a fundamental research topic in combinatorics.
In the literature, it is typically said that $T$ {\it tiles} $S$ if the set $S$ can be partitioned into parts that are all ``similar'' to $T$ in some sense.
For example, Golomb initiated the study of tilings of the checker board with ``polyominoes'' in 1954~\cite{1954Go}, and it has attracted a vast audience of both mathematicians and non-mathematicians.
% to be on the Mathematics Subject Classification used by Math Reviews.
See the book by Golumb~\cite{1994Go} for recent developments regarding this particular problem.

The question of partitioning the integers $\ZZ$ (and the positive integers $\ZZ^+$) in various ways has been considered throughout history.
For two sets $T$ and $S$ where $T\subseteq S$ and a group $G$ acting on $S$, we say that ``$T$ tiles $S$ under $G$" if $S$ can be partitioned into copies that are obtainable from $T$ via $G$; namely, there is a subset $X$ of $G$ such that $S=\amalg_{\gamma\in X}\gamma(T)$.
%For two sets $T$ and $S$ and some group action $\gamma$, we say that ``$T$ tiles $S$ under $\gamma$" if $S$ can be partitioned into copies that are obtainable from $T$ via $\gamma$; namely, there is a set $X$ where $S=\bigcup_{x\in X}\gamma(T,x)$ and $\gamma(T,y)\cap \gamma(T,z) \neq\emptyset$ if and only if $x=y$.
Tilings of $\ZZ$ and $\ZZ^+$ under translation have already been extensively studied~\cite{1950Br,1967Lo}.
It is known that a set $S$ of integers tiles $\ZZ^+$ under translation if and only if $S$ tiles some interval of $\ZZ$ under translation.
In particular, a $3$-set $S$ tiles $\ZZ^+$ under translation if and only if the elements of $S$ form an arithmetic progression.

It is easy to see that an arbitrary $2$-set of integers tiles an interval of $\ZZ$ (and therefore tiles $\ZZ$) under translation, and there are $3$-sets of integers that do not tile $\ZZ$ under translation.
However, if both translation and reflection are allowed, then Sands and Swierczkowski~\cite{1960SaSw} provided a short proof that an arbitrary $3$-set of real numbers tiles $\RR$ (simplifying a proof in~\cite{1958KoSe}), and on the way they also proved that an arbitrary $3$-set of integers tiles $\ZZ$.
It is also known that not all $4$-sets of integers tile $\ZZ$ under translation and reflection.

In his book~\cite{1976Ho}, Honsberger strengthened the previous result with a simple greedy algorithm by showing that an arbitrary $3$-set of integers tiles an interval of $\ZZ$ under translation and reflection.
Meyerowitz~\cite{1988Me} analyzed this algorithm and gave a constructive proof that the algorithm produces a tiling of an interval of $\ZZ$, and also proved that a $3$-set of real numbers tiles $\RR^+$, strengthening an aforementioned result.
This algorithm does not necessarily find the shortest interval of $\ZZ$ that a $3$-set of integers can tile;
there has been effort in trying to determine the shortest such interval~\cite{1981AlHo,2000Na}, and in some cases the shortest such interval is known.

%A set $S$ of size $3$ tiles $\ZZ^+$ under the full euclidean group.
Gordan~\cite{1980Go} generalized the problem to higher dimensions.
He proved that a $3$-set of $\ZZ^n$ tiles $\ZZ^n$ under the Euclidean group actions (translation, reflection, and rotation), and that there is a set of size $4n-2\lfloor{n/2}\rfloor$ of $\ZZ^n$ that does not tile $\ZZ^n$ under the Euclidean group actions.
More information regarding higher dimensions is in Section~\ref{sec:open}.
There is also a paper~\cite{2015Na} that studies tilings of the cyclic group $\ZZ_n$.

This paper focuses on partitioning $\ZZ$ into sets with the same ``gap sequence'' and ``gap length'', which is the term used in~\cite{2015Na} and~\cite{2005Na}, respectively.
Given a set $\{x_1, \ldots, x_n\}$ of integers where $x_1<\cdots<x_n$, let the {\it gap sequence} of this set be the nondecreasing sequence $d_1, \ldots, d_{n-1}$ where $\{d_1, \ldots, d_{n-1}\}$ equals $\{x_{i+1}-x_i:i\in\{1,\ldots, n-1\}\}$ as a multiset.
Note that the gap sequence of a set with $n$ elements has length $n-1$.
Roughly speaking, in addition to reflecting the order of the gaps of a given set, any permutation of the order of the gaps of the set is allowed.

%A set $\{x_1, \ldots, x_n\}$ is a $(d_1, \ldots, d_{n-1})$-{\it set} if $x_1<\cdots<x_n$ and $\{x_{i+1}-x_{i}:i\in[n-1]\}$ equals $\{d_1, \ldots, d_{n-1}\}$ as a multiset.
%Note that a $(d_1, \ldots, d_{n-1})$-set  has size $n$.
%It is worth mentioning that a $(d_1, \ldots, d_m)$-set is also a $(d_{\sigma(1)}, \ldots, d_{\sigma(m)})$-set for any permutation $\sigma$ of $[m]$;
%in other words, the order of $d_1, \ldots, d_m$ does not matter.

In~\cite{2005Na}, the following question was explicitly asked:

\begin{ques}[\cite{2005Na}]\label{ques}
For a gap sequence $S$ of length $n-1$, can $\ZZ$ be partitioned into $n$-sets with the same gap sequence $S$?
\end{ques}

Since allowing permutations of the order of the gaps of a given set does not provide additional help (when reflections of the gaps are already allowed), previous results above imply that this question is true when $n\in\{1, 2, 3\}$.
%For any positive integer $p$ it is easy to see that, $(p)$-sets tile the set of integers, and for any pair of positive integers $p$ and $q$, it is known that $(p, q)$-sets also tile the set of integers.
%For any three positive integers $p, q$, and $r$, it is conjectured that $(p, q, r)$-sets tile the set of integers.
%
%It is easy to see that for any positive integer $p$, the set of integers can be partitioned into sets (of two integers) so that each set is of $(p)$-set, and it is known that for any pair of positive integers $p$ and $q$, the set of integers can be partitioned into sets (of three integers) so that each set is of $(p, q)$-set.
%It is conjectured that for any three positive integers $p, q$, and $r$, the set of integers can be partitioned into sets (of four integers) so that each set is of $(p, q, r)$-set.
In this paper, we prove the following theorem that provides evidence that the question is true when $n=4$.
Corollary~\ref{cor:easy} is an immediate consequence of the theorem.

\begin{thm}\label{thm:main}
%\mainstatement
There is an interval of the integers that can be partitioned into $4$-sets with the same gap sequence $p, q, r$, if $q\geq p$ and $r\geq \max\{4q(4q-1),  {\frac{1}{\gcd(p, q)}}({5p+4q}-\gcd(p,q))({4p+3q}-\gcd(p,q)) \}$.
\end{thm}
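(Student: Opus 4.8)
The plan is to construct a single interval of $\ZZ$ that is perfectly partitioned, exploiting the hypothesis that $r$ is large compared to $p,q$. Up to reflection and permutation of gaps, a $4$-set with gap sequence $p,q,r$ has exactly three shapes, according to where the long gap $r$ sits: two ``cluster $+$ spike'' shapes, namely $\{0,p,p+q,p+q+r\}$ and $\{0,q,q+p,q+p+r\}$, in which three points lie within a window of width $p+q$ and the fourth lies at distance about $r$; and one ``domino $+$ domino'' shape $\{0,p,p+r,p+r+q\}$, in which a pair of gap $p$ and a pair of gap $q$ are separated by $r$. Since every tile has diameter $p+q+r$, any interval that can be tiled has length greater than $r$, so the construction is necessarily long and uses more than $r/4$ tiles; the role of the largeness of $r$ is to let me treat the short gaps $p,q$ as a purely \emph{local} problem that is decoupled from the long-range placement governed by $r$.

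The central device I would use is to fold the interval into rows of a common width $w$ chosen so that $r$ is congruent to a small residue modulo $w$; writing each integer as $n=iw+j$ displays $\ZZ$ as a grid in which the two short gaps $p,q$ move horizontally inside a row while the long gap $r$ becomes a near-vertical connector joining row $i$ to row $i+1$ at a controlled column shift. Under this folding each tile becomes a small two-row polyomino: the cluster shapes contribute three cells in one row and a single cell in the neighbouring row, and the domino shape contributes a gap-$p$ pair in one row and a gap-$q$ pair in the next. I would then tile a bounded number of rows by \emph{pairing a tile with its reflection}, so that the cluster left behind by a forward tile in a given row is exactly complemented by the spikes that reflected tiles send into that row from its neighbours; the boundary (first and last) rows are closed off using only the cluster shapes, whose far point is kept inside the interval.

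The arithmetic heart of the argument---and the step I expect to be the main obstacle---is reconciling the two \emph{incompatible periodicities} that the tile imposes: a gap-$p$ pair tiles an interval only when its length is a multiple of $2p$, and a gap-$q$ pair only when it is a multiple of $2q$, yet the two ends of a single domino tile are rigidly locked together by the offset $r$. To mesh the gap-$p$ side with the gap-$q$ side I would work modulo $g=\gcd(p,q)$, writing $p=gp'$ and $q=gq'$ with $\gcd(p',q')=1$, and fill the residues that a naive periodic placement misses by inserting a bounded number of correcting clusters, the existence of such a filling being a numerical-semigroup (Chicken McNugget) type statement in $p$ and $q$. Quantifying how much room this patching needs is precisely what forces the two stated lower bounds on $r$: the term $4q(4q-1)$ should guarantee enough horizontal space to realise every needed gap-$q$ adjustment, while $\frac{1}{\gcd(p,q)}(5p+4q-\gcd(p,q))(4p+3q-\gcd(p,q))$ should measure the area of a fundamental block---of side lengths on the order of $5p+4q$ and $4p+3q$ in the folded grid---that must fit inside one period before the pattern can repeat. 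Verifying that the patched block tiles exactly, with no overlaps or holes at the seams, is the delicate bookkeeping that the rest of the proof must carry out.
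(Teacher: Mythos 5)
Your proposal identifies several of the right ingredients --- folding the interval into a grid so that the long gap $r$ becomes a (near-)vertical step, classifying the three tile shapes by the position of $r$, and invoking a Frobenius/Chicken McNugget argument modulo $\gcd(p,q)$ to reconcile the two periodicities (this last point is exactly what the paper's Lemma~\ref{lem:new2} does, using that the two layer sizes, divided by their gcd, are coprime). But the proposal stops precisely where the proof has to begin: you never exhibit a single explicit local configuration and verify that it tiles without overlaps or holes. The phrases ``I would then tile a bounded number of rows by pairing a tile with its reflection'', ``inserting a bounded number of correcting clusters'', and ``the delicate bookkeeping that the rest of the proof must carry out'' defer the entire constructive content. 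The paper's proof consists almost entirely of that bookkeeping: Lemmas~\ref{lem:block1} and~\ref{lem:block2} list, block by block, explicit partitions of small prisms $S\times[h]$ into $4$-element blocks; Lemmas~\ref{lem:layer1} and~\ref{lem:layer2} assemble stretched copies of these into two layers of sizes $4q$ and $4q+1$ (resp.\ $5p+4q$ and $4p+3q$); and Lemma~\ref{lem:new} checks that the linearizing bijection really sends the three block differences to gaps $p$, $q$, $r$. Without an analogous explicit verification your argument does not establish the theorem, and in particular cannot justify the specific thresholds $4q(4q-1)$ and $\frac{1}{\gcd(p,q)}(5p+4q-\gcd(p,q))(4p+3q-\gcd(p,q))$, which arise as $d(r_1/d-1)(r_2/d-1)$ for those particular layer sizes; your ``should guarantee enough room'' reading of these constants is not a derivation.

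A second, structural concern: your folding is two-dimensional (rows of a common width $w$, with $r$ joining adjacent rows), so both short gaps $p$ and $q$ must be realized horizontally inside a row. The paper instead works in $\ZZ^3$: the gap $p$ is a horizontal step inside a row, the gap $q$ is realized by moving to the next row of a carefully sized layer (width $q$ when $q\geq 2p$, or width $p+q$ with a horizontal correction of $-p$ or $-q$ when $p\leq q\leq 2p$), and only the gap $r$ uses the third coordinate. That extra dimension is what makes the local tiling problem reduce to a handful of small explicit shapes; in your flat picture the cluster of three points spans a window of width $p+q$ inside one row, and the claim that reflections of tiles from neighbouring rows exactly complement these clusters is far from automatic --- it is precisely the kind of statement that, in the paper, requires the explicit block lists. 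So while your high-level plan is in the right spirit, as written it is a programme rather than a proof.
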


\begin{cor}\label{cor:easy}
There is an interval of the integers that can be partitioned into $4$-sets with the same gap sequence $p, q, r$, if $r\geq 63(\max\{p, q\})^2$.
\end{cor}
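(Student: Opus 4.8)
The corollary is immediate from the theorem: writing $q=\max\{p,q\}$, one checks $4q(4q-1)<16q^{2}$ and, with $g=\gcd(p,q)$, $a=p/g$, $b=q/g$, that $\frac1g(5p+4q-g)(4p+3q-g)=g(5a+4b-1)(4a+3b-1)\le g\cdot 9b\cdot 7b=63gb^{2}\le 63q^{2}$ (using $g\ge 1$ and $q=gb$), so $r\ge 63(\max\{p,q\})^{2}$ forces $r$ past both thresholds of the theorem. Thus the real content is the theorem, and I describe how I would prove that. First I would record that producing a single tileable interval suffices: if the $4$-set tiles $[0,N)$, then partitioning $\ZZ=\bigsqcup_{k\in\ZZ}[kN,(k+1)N)$ tiles $\ZZ$, so the theorem also settles Question~\ref{ques} for $n=4$ in these cases. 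Set $g=\gcd(p,q)$ and write $p=ga$, $q=gb$ with $\gcd(a,b)=1$ and $a\le b$.

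Since $r\ge q\ge p$ and in fact $r$ is enormous, in every allowed tile the gap of length $r$ is the unique largest gap; it splits the four points into a \emph{left block} and a \emph{right block} whose innermost points are exactly $r$ apart. The three tile shapes (up to reflection) correspond to the three block-size splits $3{+}1$, $2{+}2$, $1{+}3$, where a block of size $3$ is a cluster with internal gaps $\{p,q\}$, a block of size $2$ is a pair at distance $p$ or $q$, and a block of size $1$ is a single point. The core of the plan is a \emph{two-zone} reformulation: I would tile an interval $[0,2m)$ by arranging all left blocks to fall in the low half $[0,m)$ and all right blocks in the high half $[m,2m)$, which abut to form one interval. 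Because each $r$-gap has the \emph{same} length $r$, a tile whose left block ends at $\ell$ has its right block beginning at $\ell+r$; matching a low point to a high point therefore forces $m\approx r$, so the low half is an interval of length about $r$ that must be exactly covered by the left blocks, while the right blocks simultaneously cover the high half.

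Since pairs at distance $d$ tile intervals of length $2dt$ and clusters tile correspondingly, I can assemble two kinds of \emph{gadgets}: short sub-configurations of tiles whose low parts cover contiguous low sub-intervals of lengths $4p+3q=g(4a+3b)$ and $5p+4q=g(5a+4b)$. The determinant identity $\det\left(\begin{smallmatrix}4&3\\5&4\end{smallmatrix}\right)=1$ gives $\gcd(4a+3b,\,5a+4b)=\gcd(a,b)=1$, so by the Chicken McNugget (Frobenius) theorem every integer at least $(4a+3b-1)(5a+4b-1)$ is a nonnegative combination of $4a+3b$ and $5a+4b$; scaling by $g$, every sufficiently large multiple of $g$ is a nonnegative combination of the two gadget lengths, the threshold being exactly $g(4a+3b-1)(5a+4b-1)=\frac1g(5p+4q-g)(4p+3q-g)$. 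Choosing how many gadgets of each kind to use so that their total low length equals the span dictated by $r$ then produces the tiling; the secondary bound $r\ge 4q(4q-1)$ is what guarantees that the two zones are far enough apart and that the bounded ``seam'' discrepancy near $m$ (of size $O(q^{2})$, between $r$ and a clean multiple of $g$) can be absorbed by a few correcting tiles.

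The main obstacle is precisely this meshing of the two zones into a single contiguous interval. Covering the low half with left blocks and the high half with right blocks are not independent tasks: each tile contributes to both halves with the fixed offset $r$ and with complementary block sizes (each $3{+}1$ must be balanced against a $1{+}3$, with $2{+}2$ self-balancing), so the bookkeeping must keep both halves exactly filled while the global length stays a representable combination of the gadget lengths. Getting the constant-$r$ offset, the block-size balance, and the Frobenius representability to hold \emph{simultaneously}, and verifying that the two stated lower bounds on $r$ are exactly what make all three compatible, is where the genuine work lies; the remainder is the routine check that the assembled gadgets partition $[0,2m)$.
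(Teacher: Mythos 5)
Your first paragraph is exactly the paper's proof of the corollary: the paper derives it as an ``immediate consequence'' of Theorem~\ref{thm:main}, and your numerical verification is correct --- with $q=\max\{p,q\}$, $g=\gcd(p,q)$, $a=p/g\le b=q/g$, one indeed gets $\frac1g(5p+4q-g)(4p+3q-g)=g(5a+4b-1)(4a+3b-1)\le 63gb^2=63q^2/g\le 63q^2$, and $4q(4q-1)\le 63q^2$ trivially. So as a proof of the stated corollary, modulo the theorem, your argument is complete and takes the same route as the paper.

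The remaining paragraphs, where you sketch a proof of the theorem itself, diverge substantially from what the paper does. You propose a direct two-zone argument in $\ZZ$, splitting each tile at its unique $r$-gap into a left and a right block and trying to fill $[0,2m)$ with the left blocks in the low half and the right blocks in the high half. The paper instead lifts the problem to $\ZZ^3$: it covers certain ``nice layers'' of $\ZZ^2$ (times a height interval) by small $4$-point blocks with prescribed difference vectors (Lemmas~\ref{lem:block1}--\ref{lem:layer2}), then uses an order-preserving bijection $\varphi$ onto a union of arithmetic-progression segments of $\ZZ$ (Lemma~\ref{lem:new}) so that the three coordinate directions realize the gaps $dp$ (or $dq$), $da$ (or $d(a-p)$, $d(a-q)$), and $r$; the Frobenius/coin argument you invoke does appear, but at the level of combining two layer sizes $r_1/d$ and $r_2/d$ in Lemma~\ref{lem:new2}, not at the level of meshing left and right half-intervals. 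Your sketch correctly identifies the numerology ($\gcd(5a+4b,4a+3b)=1$ and the threshold $(5a+4b-1)(4a+3b-1)$), but the step you yourself flag --- making the left-block covering of $[0,m)$ and the right-block covering of $[m,2m)$ consistent with the fixed offset $r$ and the complementary block sizes --- is precisely the hard part, and it is left unresolved; the paper's three-dimensional bookkeeping is its mechanism for resolving it. Since the statement under review is the corollary and the theorem has its own proof in the paper, this does not invalidate your proof of the corollary, but the sketch should not be mistaken for a proof of Theorem~\ref{thm:main}.
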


Note that for the sake of presentation, we omit some improvements on the constants of the threshold on $r$.
Our proof follows the ideas in~\cite{2000Na,2005Na}, where partitions of $\ZZ^2$ is used to aid the partition of $\ZZ$.
We develop and push the method further and generalize it to $\ZZ^3$.
In Section~\ref{sec:lemmas}, we show that we can partition certain subsets of $\ZZ^3$ into smaller subsets of $\ZZ^3$ that we call {\it blocks}.
In Section~\ref{sec:main}, we demonstrate how to use the lemmas in Section~\ref{sec:lemmas} to tile an interval of $\ZZ$ with $4$-sets with the desired gap sequence.
We finish the paper with some open questions in Section~\ref{sec:open}.

\section{Lemmas}\label{sec:lemmas}

Given three vectors $d_1, d_2, d_3$ in $\ZZ^3$, a $4$-set $\{v_1, v_2, v_3, v_4\}$ of $\ZZ^3$ in which $\{v_4-v_3, v_3-v_2, v_2-v_1\}=\{d_1, d_2, d_3\}$ is called a {\it $(d_1, d_2, d_3)$-block}.
For a set $V$ of triples of vectors in $\ZZ^3$, we say a set $S$ of $\ZZ^2$ can be {\it covered} (with {\it height $h(S)$}) by $V$-blocks if there exists an integer $h(S)$ such that $S\times h(S)=\{(x, y, z):(x, y)\in S, z\in[h(S)]\}$ can be partitioned into blocks from $V$.
If $V$ only has one vector $v$, then we simply write ``covered by $v$-blocks'' instead of ``covered by $\{v\}$-blocks''.

Let $e_1=(1, 0, 0)$, $e_2=(0, 1, 0)$, and $e_3=(0, 0, 1)$ be unit vectors in $\ZZ^3$.
By stretching $X\subset \ZZ^3$ in the $e_1$, $e_2$, and $e_3$ direction by a real number $w$, we obtain $\{(wx, y, z): (x, y, z)\in X\}$, $\{(x, wy, z): (x, y, z)\in X\}$, and $\{(x, y, wz): (x, y, z)\in X\}$, respectively.

\subsection{When $q\geq 2p$}\label{subsec:qgeq2p}

\begin{lem}\label{lem:block1}
The following sets of $\ZZ^2$ can be covered by $(e_1, e_2, e_3)$-blocks:
\begin{enumerate}[$(i)$]
\item $S_1=\{(1, 1), (1, 2), (2, 2)\}$ with $h(S_1)=4$
\item $S_2=\{(1, 1), (2, 1), (2, 2)\}$ with $h(S_2)=4$
\item $S_3=[3]\times [2]$ with $h(S_3)=4$
\item $S_4=[k]\times[4]$ for $k\geq 2$ with $h(S_4)=20$
\item $S_5=([2]\times [4])\cup\{(3, 1),(3,2)\}$ with $h(S_5)=4$
\item $S_6=([2]\times [4])\cup\{(3, 4)\}$ with $h(S_6)=4$
\item $S_7=([ k]\times[4])\cup\{(k+1, 4)\}$ for $k\geq 2$ with $h(S_7)=20$
\needed{
\item $S_{10}=[4]\times [2]$\hfill NECESSARY?
\item $S_{11}=[5]\times [2]$\hfill NECESSARY?
\item $S_{12}=[3]\times [3]$\hfill NECESSARY?}
\end{enumerate}
\end{lem}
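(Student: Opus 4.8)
The plan is to read each statement ``$S$ can be covered with height $h(S)$'' as the assertion that the prism $S\times[h(S)]\subset\ZZ^3$ can be tiled by $(e_1,e_2,e_3)$-blocks, and to build everything from a handful of explicit tilings together with one combinatorial gluing move. It helps to first record the shape of a block: an $(e_1,e_2,e_3)$-block is a monotone lattice staircase $v_1,v_2,v_3,v_4$ whose three steps are $e_1,e_2,e_3$ in some order, so $v_4-v_1=(1,1,1)$; thus a block joins two antipodal corners of a unit cube, occupies four of its eight corners, and spans exactly two coordinates in each direction. There are exactly six block types. Permuting the coordinate axes carries $(e_1,e_2,e_3)$-blocks to $(e_1,e_2,e_3)$-blocks, so coverability is invariant under permuting $e_1,e_2,e_3$; a reflection (a sign change of some $e_i$) is \emph{not}, which is precisely why the two L-shapes $S_1,S_2$, and the two oppositely placed bumps of $S_5,S_6$, have to be treated as genuinely distinct cases.

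The main tool will be a \emph{juxtaposition principle}: if planar sets $A,B$ are disjoint and each is coverable at a common height $h$, then $A\cup B$ is coverable at height $h$, since a block inside $A\times[h]$ uses only columns of $A$ and the two tilings cannot interfere; combined with the fact that coverability at height $h$ implies coverability at every multiple of $h$ (stack copies), this reduces most of the lemma to a few atomic tilings. I would prove $(i)$ by exhibiting the partition of $S_1\times[4]$ into the three blocks
\[
\{(1,1,1),(1,2,1),(2,2,1),(2,2,2)\},\ \{(1,1,2),(1,2,2),(1,2,3),(2,2,3)\},\ \{(1,1,3),(1,1,4),(1,2,4),(2,2,4)\},
\]
then obtain $(ii)$ from $(i)$ by the axis swap $e_1\leftrightarrow e_2$, and $(iii)$ for free from $[3]\times[2]=S_1\sqcup(S_2+(1,0))$. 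The finite shapes $S_5,S_6$ (height $4$) would be verified by analogous explicit block lists; here the extra cells at $x=3$ are exactly what lets the construction avoid stranding an uncoverable $2\times2$ region. I would also note that $[3]\times[4]$ is coverable at height $4$, being the disjoint union of four translated monotone L-trominoes $S_1,S_2$.

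Everything then reduces, through juxtaposition, to a short list of strips. For $(iv)$, writing any $k\ge2$ as $2a+3b$ with $a,b\ge0$, the rectangle $[k]\times[4]$ is a side-by-side union of $a$ copies of $[2]\times[4]$ and $b$ copies of $[3]\times[4]$; since $[3]\times[4]$ is coverable at height $4$ (hence at height $20$), $(iv)$ follows once $[2]\times[4]$ is covered at height $20$. For $(vii)$, the decomposition $([k]\times[4])\cup\{(k+1,4)\}=([k-2]\times[4])\sqcup(S_6+(k-2,0))$ expresses $S_7$ as a juxtaposition of a rectangle handled by $(iv)$ and a translate of $S_6$, with the few smallest values of $k$ checked directly. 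Thus the whole burden falls on a single genuinely three-dimensional tiling, that of $[2]\times[4]$.

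\paragraph{Main obstacle.} The strip $[2]\times[4]$ resists all the cheap moves: its eight columns cannot be assembled from the smaller coverable footprints, and one cannot simply stack planar or low slabs, because $[2]\times[2]$ is \emph{never} coverable. Indeed, in $[2]\times[2]\times[h]$ every block has its minimal corner in the column over $(1,1)$, so all $h$ blocks meet that column; each must therefore cover exactly one of its $h$ cells, no block can begin with an $e_3$-step, and the top cell over $(1,1)$ is left uncovered. Any attempt to tile $[2]\times[4]$ with blocks confined to two consecutive $z$-levels runs into exactly this trap, so the blocks must thread through all the available levels; the value $20$ is chosen to supply enough room and to be a common multiple of the heights of the other pieces. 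I therefore expect the real content of the lemma to be one mildly intricate, explicit height-$20$ partition of $[2]\times[4]$, after which $(iv)$ and $(vii)$ follow formally; as remarked, the constant $20$ is not optimized.
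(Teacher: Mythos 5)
Your reductions are sound and are essentially the ones the paper uses: the explicit three-block tiling of $S_1\times[4]$ is exactly the paper's, deriving $(ii)$ from $(i)$ via the transposition $e_1\leftrightarrow e_2$ is a legitimate shortcut (the paper instead lists blocks), $(iii)$ as $S_1\sqcup(S_2+(1,0))$ is the paper's decomposition, $(iv)$ via $k=2a+3b$ and $(vii)$ via peeling off a translate of $S_6$ are minor variants of the paper's ``attach $[2]\times[4]$ strips side by side'' argument, and your observation that $[2]\times[2]$ is never coverable is a correct (and nice) explanation of why a genuinely three-dimensional tiling is unavoidable. The juxtaposition principle and the height-multiplication trick are both valid.

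The gap is that you never produce the tilings that carry the actual content of the lemma. You explicitly defer the partition of $[2]\times[4]\times[h]$ (``I expect the real content \dots to be one mildly intricate, explicit height-$20$ partition''), you wave at $S_5$ and $S_6$ with ``analogous explicit block lists'' without giving them, and the $k=3$ case of $(vii)$ is left as ``checked directly'' (note that your decomposition $([k]\times[4])\cup\{(k+1,4)\}=([k-2]\times[4])\sqcup(S_6+(k-2,0))$ genuinely fails at $k=3$, since $[1]\times[4]$ is not coverable). Since the lemma is precisely the assertion that these explicit partitions exist, a proof that reduces everything to them and then stops is not a proof. For the record, the paper tiles $[2]\times[4]$ at height $5$ with ten explicit blocks (the $20$ in $(iv)$ and $(vii)$ is $\mathrm{lcm}(4,5)$, needed to juxtapose height-$5$ and height-$4$ pieces), gives ten blocks for $S_5\times[4]$ and nine for $S_6\times[4]$, and handles $k=3$ of $(vii)$ as $S_5\sqcup(S_1+(2,2))$. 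Until you exhibit these (or equivalent) partitions, cases $(iv)$--$(vii)$ remain unproved. A small expository quibble: you say $S_1$ and $S_2$ ``have to be treated as genuinely distinct cases'' because reflections are disallowed, yet you then correctly obtain $(ii)$ from $(i)$ by the allowed axis transposition; the two statements sit awkwardly together.
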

\sout{
\begin{proof}
See Figure~\ref{fig:block1} for an illustration of some cases.

$(i)$:
\begin{center}
$B_1=\{(1, 1, 1), (1, 2, 1), (2, 2, 1), (2, 2, 2)\}$,

$B_2=\{(1, 1, 2), (1, 2, 2), (1, 2, 3), (2, 2, 3)\}$,

$B_3=\{(1, 1, 3), (1, 1, 4), (1, 2, 4), (2, 2, 4)\}$.
\end{center}

$(ii)$:
\begin{center}
$B_1=\{(1, 1, 1), (2, 1, 1), (2, 2, 1), (2, 2, 2)\}$,

$B_2=\{(1, 1, 2), (2, 1, 2), (2, 1, 3), (2, 2, 3)\}$,

$B_3=\{(1, 1, 3), (1, 1, 4), (2, 1, 4), (2, 2, 4)\}$.
\end{center}

$(iii)$:
Combine $S_1$ and the block obtained by shifting $S_2$ by $e_1$.

$(iv)$:
If $k$ is even, then we can do better and obtain $h(S_4)=5$.
It is not hard to see we can fill $S_4$ with blocks of $[ 2]\times [ 4]$ by putting them side by side, so it is sufficient to show how to fill $[ 2]\times [ 4]$.

\begin{center}
$B_1=\{(1, 1, 1), (2, 1, 1), (2, 1, 2), (2, 2, 2)\}$,

$B_2=\{(1, 2, 1), (2, 2, 1), (2, 3, 1), (2, 3, 2)\}$,

$B_3=\{(1, 3, 1), (1, 4, 1), (2, 4, 1), (2, 4, 2)\}$,

$B_4=\{(1, 1, 2), (1, 2, 2), (1, 2, 3), (2, 2, 3)\}$,

$B_5=\{(1, 3, 2), (1, 4, 2), (1, 4, 3), (2, 4, 3)\}$,

$B_6=\{(1, 1, 3), (2, 1, 3), (2, 1, 4), (2, 2, 4)\}$,

$B_7=\{(1, 3, 3), (2, 3, 3), (2, 3, 4), (2, 4, 4)\}$,

$B_8=\{(1, 1, 4), (1, 1, 5), (2, 1, 5), (2, 2, 5)\}$,

$B_9=\{(1, 2, 4), (1, 2, 5), (1, 3, 5), (2, 3, 5)\}$,

$B_{10}=\{(1, 3, 4), (1, 4, 4), (1, 4, 5), (2, 4, 5)\}$.
\end{center}

If $k$ is odd, then $h(S_4)=20$.
It is not hard to see we can fill $S_4$ with two copies of $S_3$ (which was already shown to be covered in $(iii)$) by putting one on top of another and then using blocks of $[2]\times [4]$ side by side.
Note that the least common multiple of $h(S_3)=4$ and $h([2]\times[4])=5$ is $20$.

$(v)$:
\begin{center}
$B_1=\{(1, 1, 1), (1, 1, 2), (1, 2, 2), (2, 2, 2)\}$,

$B_2=\{(1, 2, 1), (2, 2, 1), (2, 3, 1), (2, 3, 2)\}$,

$B_3=\{(2, 1, 1), (3, 1, 1), (3, 2, 1), (3, 2, 2)\}$,

$B_4=\{(1, 3, 1), (1, 4, 1), (2, 4, 1), (2, 4, 2)\}$,

$B_5=\{(2, 1, 2), (3, 1, 2), (3, 1, 3), (3, 2, 3)\}$,

$B_6=\{(1, 3, 2), (1, 4, 2), (1, 4, 3), (2, 4, 3)\}$,

$B_7=\{(1, 1, 3), (1, 1, 4), (1, 2, 4), (2, 2, 4)\}$,

$B_8=\{(1, 2, 3), (2, 2, 3), (2, 3, 3), (2, 3, 4)\}$,

$B_9=\{(2, 1, 3), (2, 1, 4), (3, 1, 4), (3, 2, 4)\}$,

$B_{10}=\{(1, 3, 3), (1, 3, 4), (1, 4, 4), (2, 4, 4)\}$.
\end{center}

$(vi)$:
\begin{center}
$B_1=\{(1, 1, 1), (2, 1, 1), (2, 2, 1), (2, 2, 2)\}$,

$B_2=\{(1, 2, 1), (1, 2, 2), (1, 3, 2), (2, 3, 2)\}$,

$B_3=\{(1, 3, 1), (1, 4, 1), (1, 4, 2), (2, 4, 2)\}$,

$B_4=\{(2, 3, 1), (2, 4, 1), (3, 4, 1), (3, 4, 2)\}$,

$B_5=\{(1, 1, 2), (2, 1, 2), (2, 1, 3), (2, 2, 3)\}$,

$B_6=\{(1, 1, 3), (1, 1, 4), (2, 1, 4), (2, 2, 4)\}$,

$B_7=\{(1, 2, 3), (1, 2, 4), (1, 3, 4), (2, 3, 4)\}$,

$B_8=\{(1, 3, 3), (1, 4, 3), (1, 4, 4), (2, 4, 4)\}$,

$B_9=\{(2, 3, 3), (2, 4, 3), (3, 4, 3), (3, 4, 4)\}$.
\end{center}

$(vii)$:
Assume $k$ is even.
It is not hard to see we can fill $S_7$ with one $S_6$ (which was already shown to be covered in $(vi)$) and then using blocks of $[ 2]\times [ 4]$ side by side.
Note that the least common multiple of $h(S_6)=4$ and $h([2]\times[4])=5$ is $20$.

Assume $k$ is odd.
When $k=3$, it is not hard to see that we can fill $S_7$ with one $S_1$ (which was already shown to be covered in $(i)$) and one $S_5$ (which was already shown to be covered in $(v)$).
When $k>3$, attach blocks of $[2]\times [4]$ side by side to the configuration when $k=3$.
Note that the least common multiple of $h(S_6)=4$ and $h([2]\times[4])=5$ is $20$.
%\begin{center}
%Combine the block in $(vi)$ and the block obtained by shifting the block in $(ii)$ by $2e_1+2e_2$.
%\end{center}
%%%%%%%%%%%% NECESSARY?
\needed{
\\$(x)$:
$h(S_{10})=5$.
NECESSARY?\\
$(xi)$:
$h(S_{11})=8$.
NECESSARY?\\
$(xii)$:
$h(S_{12})=4$.
NECESSARY?}
%%%%%%%%%%%%%
\end{proof}
}

\begin{figure}[h]
	\begin{center}
  \includegraphics[scale=0.5]{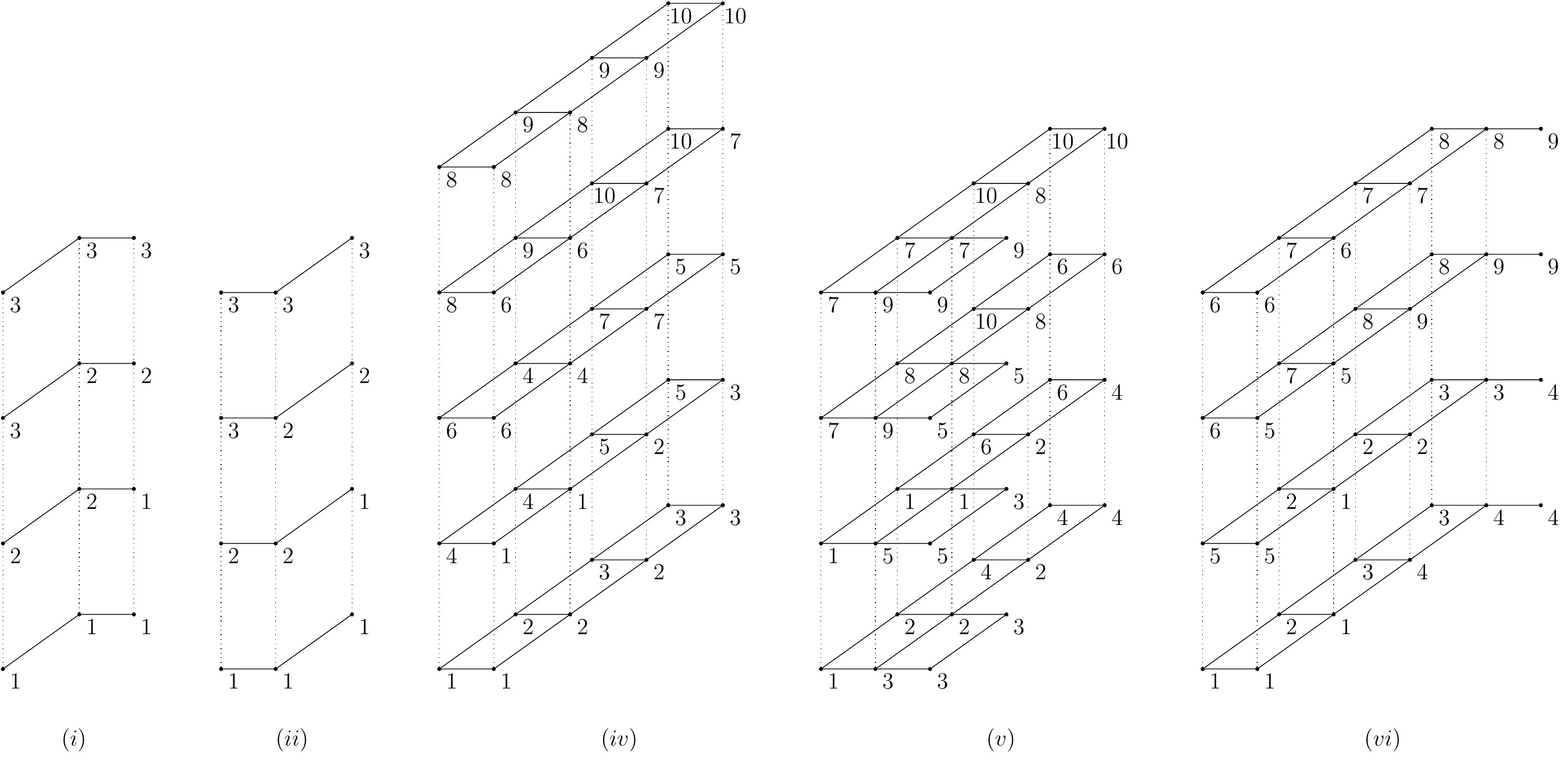}
  \caption{Illustration for some cases of Lemma~\ref{lem:block1}.}
  \label{fig:block1}
	\end{center}
\end{figure}

\begin{lem}\label{lem:layer1}
Given $q\geq 2p$, the following sets can be covered by $(pe_1, e_2, e_3)$-blocks:
\begin{enumerate}[(i)]
\item $X_1=[ q]\times[4]$ with $h(X_1)=20$
\item $X_2=([ q]\times[4])\cup\{(q+1, 4)\}$ with $h(X_2)=20$
\end{enumerate}
\end{lem}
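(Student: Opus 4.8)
The plan is to reduce both statements to the $(e_1,e_2,e_3)$-block coverings already established in Lemma~\ref{lem:block1}, by slicing $X_i\times[20]$ into residue classes modulo $p$ in the $e_1$ direction. The key structural observation I would record first is that in any $(pe_1,e_2,e_3)$-block exactly one of the three steps equals $pe_1$, and this is the only step that changes the first coordinate (the $e_2$ and $e_3$ steps fix it); hence the four vertices of such a block have first coordinates taking just two values $x_0$ and $x_0+p$, which lie in the same residue class modulo $p$. It follows that no block straddles two residue classes, so a partition of $X_i\times[20]$ into $(pe_1,e_2,e_3)$-blocks exists as soon as the sub-region in each fixed class $c\in\{0,\dots,p-1\}$ can be partitioned on its own.

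Next I would relabel within each class. For a fixed $c$, the first coordinates occurring are $c,c+p,c+2p,\dots$, and sending the $j$-th of these to $j$ is a bijection that turns a $pe_1$ step into an $e_1$ step while fixing all $e_2$ and $e_3$ steps; it therefore carries $(pe_1,e_2,e_3)$-blocks bijectively to $(e_1,e_2,e_3)$-blocks. Under this relabeling the class-$c$ part of $X_1=[q]\times[4]$ becomes $[m_c]\times[4]$, where $m_c$ counts the integers of $[q]$ congruent to $c$. Since $q\ge 2p$ forces $m_c\ge\lfloor q/p\rfloor\ge 2$, each relabeled part is an instance of $S_4$ with $k=m_c\ge 2$, coverable with height $20$ by Lemma~\ref{lem:block1}$(iv)$. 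As the $p$ classes partition $X_1\times[20]$ and all share the common height $20$, assembling their coverings proves $(i)$.

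For $(ii)$ I would treat the single extra point $(q+1,4)$ separately. Writing $c^\ast=(q+1)\bmod p$, this point lies in class $c^\ast$, and because $q+1$ exceeds the largest element of $[q]$ in that class by exactly $p$, it relabels to index $m_{c^\ast}+1$ with second coordinate $4$. Hence the relabeled class-$c^\ast$ region is exactly $([m_{c^\ast}]\times[4])\cup\{(m_{c^\ast}+1,4)\}$, an instance of $S_7$ with $k=m_{c^\ast}\ge 2$, coverable with height $20$ by Lemma~\ref{lem:block1}$(vii)$; every other class remains an $S_4$ covered with height $20$. Combining the classes gives $(ii)$ with $h(X_2)=20$.

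The only point that genuinely requires the hypothesis — and the one I would flag as the crux rather than as a serious difficulty — is the counting step: I must confirm that $q\ge 2p$ guarantees $m_c\ge 2$ for every residue (and in particular $m_{c^\ast}\ge 2$), so that the parameters $k=m_c$ always fall in the ranges $k\ge 2$ required by parts $(iv)$ and $(vii)$ of Lemma~\ref{lem:block1}. Everything else is routine verification that the residue-class decomposition, the relabeling, and the shared height $20$ are mutually compatible.
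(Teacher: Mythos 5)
Your proposal is correct and is essentially the paper's own argument: the paper likewise splits $[q]\times[4]$ (resp.\ its extension by $(q+1,4)$) into the $p$ residue classes of the first coordinate modulo $p$, identifies each class with a copy of $S_4$ (with $k=\lfloor q/p\rfloor$ or $\lfloor q/p\rfloor+1$, both at least $2$ since $q\ge 2p$) stretched by $p$ in the $e_1$ direction, and replaces the class containing $q+1$ by a stretched $S_7$, then invokes Lemma~\ref{lem:block1}$(iv)$ and $(vii)$ with the common height $20$. Your residue-class/relabeling formulation and your verification that no $(pe_1,e_2,e_3)$-block straddles two classes match the paper's ``stretching'' argument step for step.
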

\sout{
\begin{proof}
%Since $(pe_1, e_2, e_3)$ is exactly the same as stretching $(e_1, e_2, e_3)$ in the $e_1$ direction by $p$,
It is sufficient to show that $X_1$ and $X_2$ can be covered by $(e_1, e_2, e_3)$-blocks, but stretched by $p$.

Let $a=\lfloor{\frac{q}{p}}\rfloor$ and $b=q-ap$ so that $b\in[0, p-1]$.
Note that $a\geq 2$ since $q\geq 2p$.
Obtain $P^{a+1}_4$, $P^{a}_4$, and $P^{a}_7$ by stretching $S_4$ with $k=a+1$, $S_4$ with $k=a$, and $S_7$ with $k=a$, respectively, from Lemma~\ref{lem:block1} in the $e_1$ direction by $p$;
in other words,
$P^{a+1}_4=\{(1+ip, y):i\in[0, a], y\in[4]\}$,
$P^{a     }_4=\{(1+ip, y):i\in[0, a-1], y\in[4]\}$, and
$P^{a}_7=P^{a}_4\cup\{(1+ap, 4)\}$.

Let $P^*_4=\{P^{a+1}_4+(i,0): i\in[0,  b-1]\}$ and $P^{**}_4=\{P^a_4+(i,0):i\in[b+1, p-1]\}$.

$(i)$:
Now $P^*_4$, $P^a_4+(b,0)$, $P^{**}_4$ is a partition of $X_1$.
Since $S_4$ can be covered with height $20$, we conclude that $X_1$ can be covered with height $20$.

$(ii)$:
Now $P^*_4$, $P^a_7+(b,0)$, $P^{**}_4$ is a partition of $X_2$.
Since $S_4$ and $S_7$ can be covered with height $20$, we conclude that $X_2$ can be covered with height $20$.

See Figure~\ref{fig:layer1} for an illustration.
\end{proof}
}

\begin{figure}[h]
	\begin{center}
  \includegraphics[scale=1]{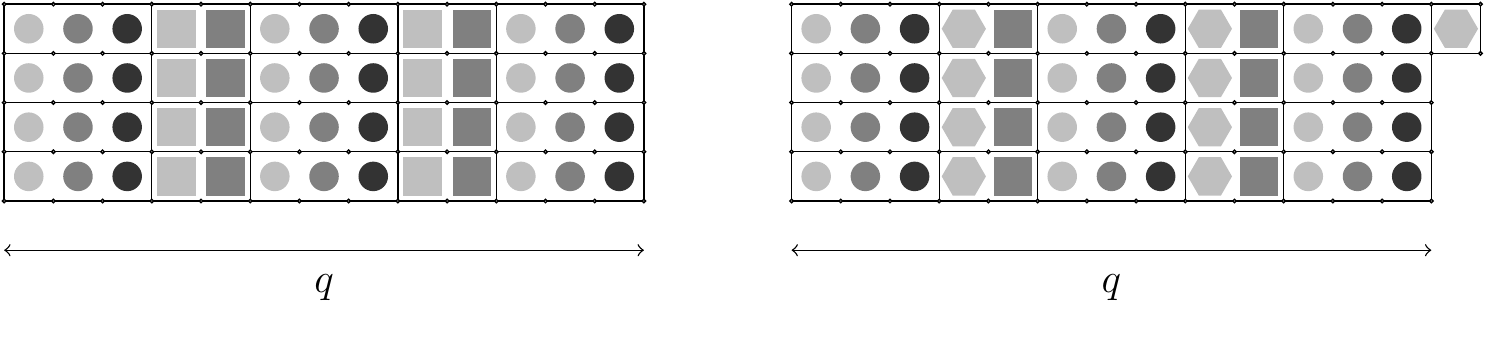}
  \caption{Figure for Lemma~\ref{lem:layer1}. Same shape and same shade means same block. Same shape and different shade means same type of block, but different block.}
  \label{fig:layer1}
	\end{center}
\end{figure}

\subsection{When $p\leq q\leq 2p$}\label{subsec:qleq2p}

\begin{lem}\label{lem:block2}
The following sets of $\ZZ^2$ can be covered by $(e_1, e_2-e_1, e_3)$-blocks:
\begin{enumerate}[$(i)$]
\item $T_1=\{(1, 1), (1, 2), (2, 1)\}$   with $h(T_1)=4$
\item $T_2=\{(1, 2), (2, 1), (2, 2)\}$   with $h(T_2)=4$
\item $T_3=\{(1, 2), (1,3), (2, 1), (2, 2)\}$   with $h(T_3)=2$
\item $T_4=[1,2]\times [1,2]$   with $h(T_4)=2$
\item $T_5=\{(1,1), (1,2), (1,3), (2,1), (2,2), (3,1)\}$ with $h(T_5)=2$% 추가
\needed{ \item $T_5=[1,3]\times [1,3]$\hfill NECESSARY?}
\end{enumerate}
\end{lem}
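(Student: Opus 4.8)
The plan is to prove each part exactly as in Lemma~\ref{lem:block1}, by exhibiting an explicit partition of $T_i\times[h(T_i)]$ into $(e_1,e_2-e_1,e_3)$-blocks. First I would record the geometry of a single block. Writing the three steps $v_2-v_1,v_3-v_2,v_4-v_3$ as a permutation of $e_1$, $e_2-e_1$, $e_3$, the four points form a staircase whose total displacement is $v_4-v_1=e_1+(e_2-e_1)+e_3=(0,1,1)$; thus the $x$-coordinate returns to where it started while $y$ and $z$ each increase by one. In particular every block uses exactly one $e_3$-step, so it meets two consecutive $z$-layers and none lies inside a single layer. This already forces $h(T_i)\ge 2$ for every shape, and combined with the fact that each block has four points, the divisibility $4\mid |T_i|\,h(T_i)$ forces $h$ to be a multiple of $4$ for the three-cell shapes $T_1,T_2$ and even for the six-cell shape $T_5$. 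This is precisely why the stated heights appear, and it also tells me the exact number of blocks to find in each case.

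I would dispatch the height-$2$ shapes $T_3$, $T_4$, and $T_5$ first, since each needs only two or three blocks spread over two layers. The guiding principle is that a block occupying a given footprint is nearly determined once its starting corner $v_1$ is fixed: because $v_4=v_1+(0,1,1)$ and the $x$-coordinate must stay inside the footprint, only one or two orderings of the three steps keep the staircase inside $T_i\times[2]$. Choosing $v_1$ at a low corner (smallest available $y$, with $z=1$) typically forces its partner on the complementary points, and one then checks directly that the resulting blocks are disjoint and exhaust $T_i\times[h(T_i)]$. For example, $T_4\times[2]$ splits as $\{(1,1,1),(1,1,2),(2,1,2),(1,2,2)\}$ together with $\{(2,1,1),(1,2,1),(2,2,1),(2,2,2)\}$, and an entirely analogous corner-by-corner argument handles $T_3$ and the staircase hexomino $T_5$.

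The height-$4$ shapes $T_1$ and $T_2$ are where the real bookkeeping lies, and I expect this to be the main obstacle. Each is a three-cell footprint, so $T_i\times[4]$ has twelve points to be covered by three blocks, and the three footprint cells form the single staircase $(1,1)\to(2,1)\to(1,2)$. Classifying blocks by the position of the $e_3$-step gives three types, which contribute the point-counts $(2,1,1)$, $(1,2,1)$, and $(1,1,2)$ to the three columns $(1,1)$, $(2,1)$, $(1,2)$. Requiring each column to receive four points yields a small linear system whose unique solution uses exactly one block of each type; this in turn pins down how the three $e_3$-steps must be staggered across the four $z$-layers, making the correct layering essentially unique. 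That rigidity both suggests and verifies the explicit list: for $T_1$ one valid choice is $\{(1,1,1),(2,1,1),(1,2,1),(1,2,2)\}$, $\{(1,1,2),(2,1,2),(2,1,3),(1,2,3)\}$, and $\{(1,1,3),(1,1,4),(2,1,4),(1,2,4)\}$.

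Finally, for $T_2$ I would avoid repeating the work: since $T_2$ is the image of $T_1$ under an affine map that permutes the three block-vectors (interchanging $e_1$ and $e_2-e_1$ while fixing $e_3$ and acting trivially on $z$), any covering of $T_1\times[4]$ transports to a covering of $T_2\times[4]$. The only remaining task is the routine disjointness-and-exhaustion check for each listed partition, which the column-count bookkeeping above already organizes.
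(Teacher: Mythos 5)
Your approach is the same as the paper's: both proofs consist of exhibiting explicit partitions of $T_i\times[h(T_i)]$ into $(e_1,e_2-e_1,e_3)$-blocks, and the partitions you write down for $T_1$ and $T_4$ are correct and in fact coincide with the ones in the paper. Two remarks. First, your handling of $T_2$ is a genuine (small) improvement over the paper: the affine map $(x,y,z)\mapsto(-x+3,\,x+y-1,\,z)$ has linear part interchanging $e_1$ and $e_2-e_1$ and fixing $e_3$, hence carries blocks to blocks and carries $T_1\times[4]$ bijectively onto $T_2\times[4]$, so the $T_2$ case really does follow from the $T_1$ case; the paper instead lists three new blocks. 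Your column-count argument for $T_1$ (the three block types contribute $(2,1,1)$, $(1,2,1)$, $(1,1,2)$ to the three columns, forcing exactly one block of each type) is also a clean certificate that $h=4$ is both necessary and achievable. Second, the only shortfall: for $T_3$ and $T_5$ you assert that ``an entirely analogous corner-by-corner argument'' works but never exhibit the partitions, and for a lemma whose proof is nothing but the explicit block lists this leaves those two cases formally unverified (your heuristic that the starting corner forces the block is not quite true --- e.g.\ from $(2,1,1)$ in $T_3\times[2]$ both the $e_2-e_1$ and the $e_3$ step are legal --- so a short case check is still needed). The gap is routine to close: $T_3\times[2]$ splits as $\{(2,1,1),(2,1,2),(1,2,2),(2,2,2)\}$ and $\{(1,2,1),(2,2,1),(1,3,1),(1,3,2)\}$, and $T_5\times[2]$ splits as $\{(1,1,1),(1,1,2),(2,1,2),(1,2,2)\}$, $\{(2,1,1),(3,1,1),(3,1,2),(2,2,2)\}$, $\{(1,2,1),(2,2,1),(1,3,1),(1,3,2)\}$; but these lists (or your own) need to appear for the proof to be complete.
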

\sout{
\begin{proof}
See Figure~\ref{fig:block2} for an illustration.

$(i)$:
\begin{center}
$B_1=\{(1, 1, 1), (2, 1, 1), (1, 2, 1), (1, 2, 2)\}$,

$B_2=\{(1, 1, 2), (2, 1, 2), (2, 1, 3), (1, 2, 3)\}$,

$B_3=\{(1, 1, 3), (1, 1, 4), (2, 1, 4), (1, 2, 4)\}$.
\end{center}

$(ii)$:
\begin{center}
$B_1=\{(2, 1, 1), (1, 2, 1), (2, 2, 1), (2, 2, 2)\}$,

$B_2=\{(2, 1, 2), (1, 2, 2), (1, 2, 3), (2, 2, 3)\}$,

$B_3=\{(2, 1, 3), (2, 1, 4), (1, 2, 4), (2, 2, 4)\}$.
\end{center}

$(iii)$:
\begin{center}
$B_1=\{(2, 1, 1), (2, 1, 2), (1, 2, 2), (2, 2, 2)\}$,

$B_2=\{(1, 2, 1), (2, 2, 1), (1, 3, 1), (1, 3, 2)\}$.
\end{center}

$(iv)$:
\begin{center}
$B_1=\{(1, 1, 1), (1, 1, 2), (2, 1, 2), (1, 2, 2)\}$,

$B_2=\{(2, 1, 1), (1, 2, 1), (2, 2, 1), (2, 2, 2)\}$.
\end{center}

$(v)$:
\begin{center}
$B_1=\{(1, 1, 1), (1, 1, 2), (2, 1, 2), (1, 2, 2)\}$,

$B_2=\{(2, 1, 1), (3, 1, 1), (3, 1, 2), (2, 2, 2)\}$,

$B_3=\{(1,2,1), (2,2,1), (1,3,1), (1,3,2)\}$.
\end{center} % 추가 항목

%%%%%%%%%%%%%
\needed{
$(v)$:
$h(T_5)=4$.
NECESSARY?}
%%%%%%%%%%%%%%%%%
\end{proof}
}

\begin{figure}[h]
	\begin{center}
  \includegraphics[scale=0.85]{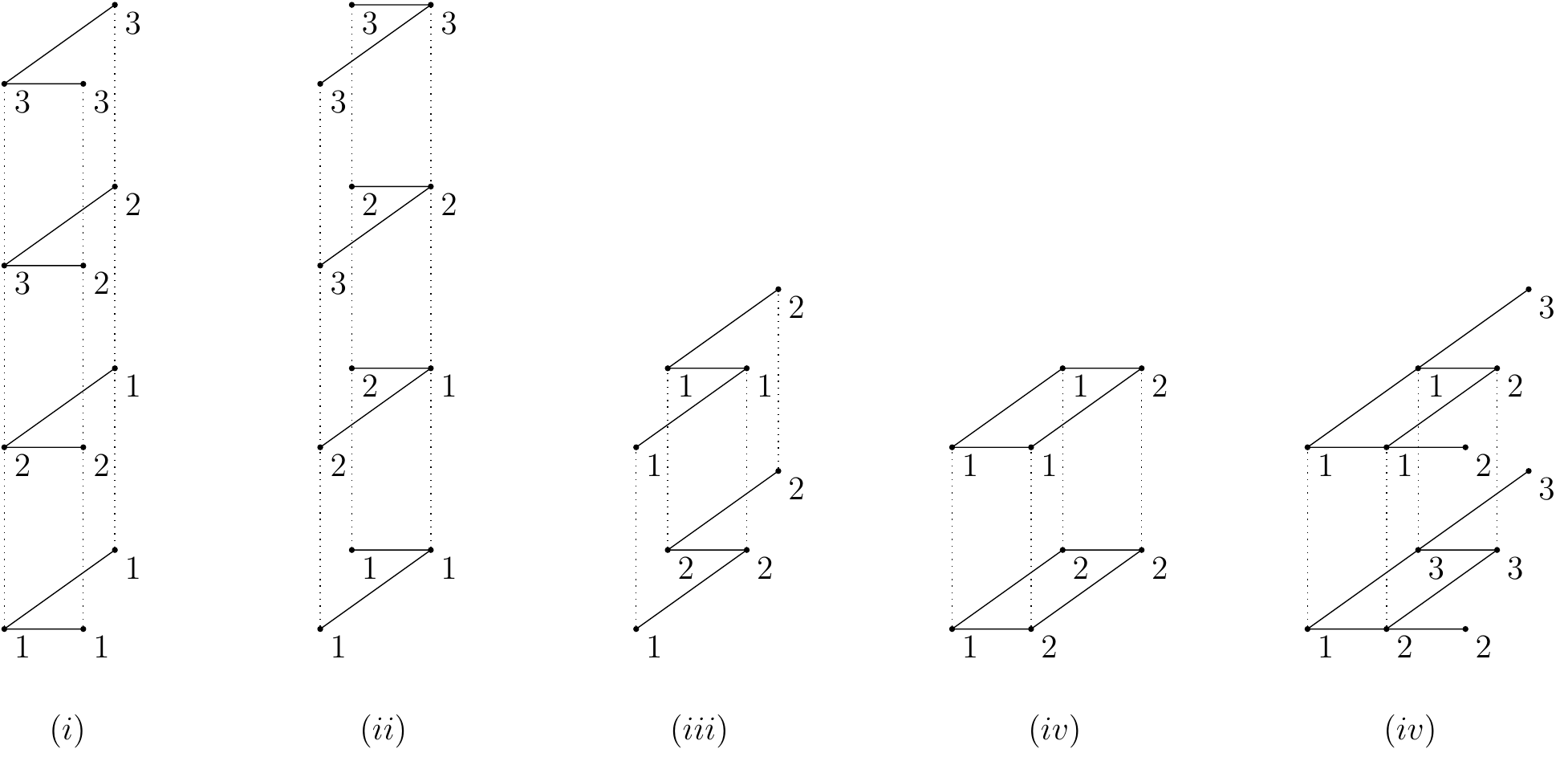}
  \caption{Illustration for Lemma~\ref{lem:block2}.}
  \label{fig:block2}
	\end{center}
\end{figure}

% 다음과 같은 그림을 추가
% \begin{figure}[htbp]
% \tikzstyle{v}=[circle, draw, solid, fill=black, inner sep=0pt, minimum width=1pt]
%  \centering
%  \begin{tikzpicture}[scale=1]
%    \node[v](v1) at (0,0,0){}; \node[right] at (-0.05,-0.2,0){1};
%    \node[v](v2) at (0,2,0){}; \node[right] at (-0.05, 1.8, 0){1};
%    \node[v](v3) at (0.9,2,0){}; \node[right] at (0.85, 1.8, 0){1};
%    \node[v](v4) at (0,2,-3.2){}; \node[right] at (-0.05, 1.8,-3.2){1};
%    \node[v](v5) at (0.9,0,0){}; \node[right] at (0.85, -0.2, 0){2};
%    \node[v](v6) at (1.8,0,0){}; \node[right] at (1.75, -0.2, 0){2};
%    \node[v](v7) at (1.8,2,0){}; \node[right] at (1.75, 1.8, 0){2};
%    \node[v](v8) at (0.9,2,-3.2){}; \node[right] at (0.85, 1.8, -3.2){2};
%    \node[v](v9) at (0,0,-3.2){}; \node[right] at (-0.05,-0.2,-3.2){3};
%    \node[v](v10) at (0.9,0,-3.2){}; \node[right] at (0.85, -0.2, -3.2){3};
%    \node[v](v11) at (0,0,-6.4){}; \node[right] at (-0.05,-0.2,-6.4){3};
%    \node[v](v12) at (0,2,-6.4){}; \node[right] at (-0.05,1.8,-6.4){3};
%    \draw (v1)--(v5)--(v6); \draw (v9)--(v10); \draw (v1)--(v9)--(v11); \draw (v5)--(v10);
%    \draw (v2)--(v3)--(v7); \draw (v4)--(v8); \draw (v2)--(v4)--(v12); \draw (v3)--(v8);
%    \draw[dotted] (v1)--(v2); \draw[dotted] (v3)--(v5); \draw[dotted] (v6)--(v7);
%    \draw[dotted] (v4)--(v9); \draw[dotted] (v11)--(v12); \draw[dotted] (v8)--(v10);
%   \end{tikzpicture}
%  \caption{v5}
%  \label{fig:v5}
%\end{figure}

\begin{lem}\label{lem:layer2}
Given $p\leq q\leq 2p$, the following sets can be covered by $\{(pe_1, e_2-pe_1, e_3), (qe_1, e_2-qe_1, e_3)\}$-blocks:
\begin{enumerate}[(i)]
\item $Y_1=([p+q]\times[4])\cup \{(x,5): x\in[p]\}$  with $h(Y_1)=4$ % 2p+2q를 5p+4q로 수정
\item $Y_2=([p+q]\times[3])\cup \{(x, 4): x\in[p]\}$  with $h(Y_2)=4$
\end{enumerate}
\end{lem}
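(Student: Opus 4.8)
The plan is to reduce both parts to Lemma~\ref{lem:block2} by stretching, and then to assemble the two target regions out of translated, stretched copies of the pieces $T_1,\dots,T_5$.

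First I would record the key stretching observation: if a set $S\subseteq\ZZ^2$ is covered by $(e_1,e_2-e_1,e_3)$-blocks, then stretching that covering in the $e_1$-direction by $p$ turns each $(e_1,e_2-e_1,e_3)$-block into a $(pe_1,e_2-pe_1,e_3)$-block, while stretching by $q$ turns it into a $(qe_1,e_2-qe_1,e_3)$-block, and in both cases the height is unchanged. Thus each $T_i$ of Lemma~\ref{lem:block2}, after stretching by $p$ (resp.\ $q$), yields a set coverable by $(pe_1,e_2-pe_1,e_3)$-blocks (resp.\ $(qe_1,e_2-qe_1,e_3)$-blocks) of the same height $h(T_i)\in\{2,4\}$. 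Since translating a block of either type produces another block of the same type, it suffices to partition $Y_1$ and $Y_2$, as planar regions, into translates of these stretched footprints: every footprint of height $4$ is placed directly, and every footprint of height $2$ is used twice, once in the slab $z\in[1,2]$ and once in $z\in[3,4]$, so that a planar partition lifts to a partition of the height-$4$ extrusions of $Y_1$ and $Y_2$.

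For the construction itself I would deal with the width-$p$ overhang first. A $q$-stretched copy of $T_1$ can be placed so that its horizontal $qe_1$-step carries it from an overhang column to a column $q$ further to the right: concretely, covering the overhang cell in column $x$ by such a piece also covers column $x+q$ one row below the overhang. As $x$ ranges over $[p]$, these far cells sweep out exactly the rightmost $p$ columns of the top full row, so this single family of pieces simultaneously clears the overhang and the top-right corner region and leaves only a ``bump'' of width $q-p$ in the middle of that row. I would then absorb the bump with $p$-stretched copies of $T_3$ (and, when the bump is wide, $T_5$), each reaching down and to the right, after which only full two-row strips remain. These are filled with $p$- and $q$-stretched copies of the square $T_4$ together with the trominoes $T_1,T_2$; in particular the strip $[p+q]\times[2]$ admits such a partition. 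Finally, since every piece used has height $2$ or $4$, extruding to height $4$ (and doubling the height-$2$ pieces across the two slabs) yields $h(Y_1)=h(Y_2)=4$.

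The hard part will be the exact bookkeeping of the leftover two-row regions for all $p\le q\le 2p$ at once. After the overhang and the bump are cleared, what remains is a union of two-row blocks whose widths are governed by $s:=q-p$, and partitioning these cleanly into $T_4$-squares and $T_1,T_2$-trominoes without stranding an isolated cell requires splitting into cases according to the size of $s$ relative to $p$ (and the attendant parities), so that the square spacings $p$ and $q$ match the available block widths. This is the delicate, computational heart of the argument, and, as in the proof of Lemma~\ref{lem:layer1}, it is cleanest to present with explicit coordinates and an accompanying figure; everything else is the routine verification that the listed pieces are pairwise disjoint and exhaust $Y_1$ and $Y_2$.
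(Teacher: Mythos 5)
Your overall strategy is the paper's: stretch the pieces $T_1,\dots,T_5$ of Lemma~\ref{lem:block2} in the $e_1$-direction by $p$ or by $q$, place $q$-stretched copies of $T_1$ anchored at the overhang cells so that they cover the overhang together with columns $[1,p]\cup[q+1,p+q]$ of the top full row, and absorb the remaining bump of width $t=q-p$ with $p$-stretched copies of $T_3$ (and $T_5$). The reduction steps you record (stretching preserves coverability and height; translates of blocks are blocks; height-$2$ footprints are used once in the slab $z\in[1,2]$ and once in $z\in[3,4]$ to reach $h=4$) are all correct and are exactly how the paper proceeds.

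However, the proposal stops where the content of the lemma begins. The lemma is a bare construction statement, so the explicit partition \emph{is} the proof, and you defer it as ``the delicate, computational heart of the argument.'' Moreover, your intermediate description of the leftover region is not accurate enough to be confident the plan closes. For $Y_2$, once the $q$-stretched $T_1$'s and the $p$-stretched $T_3$'s are placed, what remains is rows $1$--$2$ of columns $[1,2p]$ with row $2$ of columns $[p+1,q]$ deleted (the $T_3$'s protrude down-and-right into those rows); this is not ``full two-row strips.'' For $Y_1$ the region left below the overhang and bump pieces is essentially a three-row region, which cannot be decomposed into two-row strips at all -- the paper handles it with the three-row staircase $T_5$ on the left together with $T_1$ and $T_2$. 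Finally, the case analysis on $s=q-p$ and on parities that you anticipate is unnecessary: a single uniform formula works for every $t=q-p\in[0,p]$. For instance, for $Y_2$ the paper takes $t$ copies of $p$-stretched $T_1$ at horizontal offsets $i\in[0,t-1]$, $p-t$ copies of $p$-stretched $T_4$ at offsets $i\in[t,p-1]$, $t$ copies of $p$-stretched $T_3$ at offsets $i\in[p,p+t-1]$, and $p$ copies of $q$-stretched $T_1$ in the top rows; a similar uniform list (with $T_5$ replacing $T_4$'s role on the left) handles $Y_1$. To complete your proof you must exhibit such explicit placements and verify disjointness and exhaustion.
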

\sout{
\begin{proof}
%Since $(pe_1, e_2-pe_1, e_3)$ and $(qe_1, e_2-qe_1, e_3)$ is exactly the same as stretching $(e_1, e_2-e_1, e_3)$ in the $e_1$ direction by $p$ and $q$, respectively,
It is sufficient to show that $Y_1$ and $Y_2$ can be covered by $(e_1, e_2-e_1, e_3)$-blocks, but stretched by either $p$ or $q$.

Let $q=p+t$ so that $t\in[0, p]$.
% 수정
Obtain $P_1$, $P_2$, $P_3$, $P_4$, and $P_5$ by stretching $T_1$, $T_2$, $T_3$, $T_4$, and $T_5$ respectively, from Lemma~\ref{lem:block2} in the $e_1$ direction by $p$; in other words $P_1=\{(1, 1), (p+1, 1), (1, 2)\}$, $P_2=\{(1, 2), (p+1, 1), (p+1, 2)\}$, $P_3=\{(1, 2), (1, 3), (1+p, 1), (1+p, 2)\}$, $P_4=\{(1, 1), (1, 2), (1+p, 1), (1+p, 2)\}$,
$P_5=\{(1,1), (1,2), (1,3), (1+p,1), (1+p,2), (1+2p,1)\}$.
% 수정
Obtain $Q_1$ by stretching $T_1$ from Lemma~\ref{lem:block2} in the $e_1$ direction by $q$; in other words $Q_1=\{(1, 1), (q+1, 1), (1, 2)\}$.

% 수정
$(i)$:
Let $P^*_1=\{P_1+(i,0):i\in[t,p-1]\}$, $P^*_2=\{P_2+(i,1):i\in[t,p-1]\}$,
$P^*_3=\{P_3+(i,1):i\in[p,p+t-1]\}$, $P^*_5=\{P_5+(i,0):i\in[0,t-1]\}$,
 and $Q^*_1=\{Q_1+(i,3):i\in[0,p-1]\}$.
Now $P^*_1, P^*_2, P^*_3, P^*_5, Q^*_1$ is a partition of $Y_1$.

$(ii)$:
Let $P^{**}_1=\{P_1+(i,0):i\in[0,t-1]\}$, $P^{**}_3=\{P_3+(i,0):i\in[p,p+t-1]\}$, $P^{**}_4=\{P_4+(i,0):i\in[t,p-1]\}$,
and $Q^{**}_1=\{Q_1+(i,2):i\in[0,p-1]\}$.
Now $P^{**}_1, P^{**}_3, P^{**}_4, Q^{**}_1$ is a partition of $Y_2$.

% Let $P^*_1=\{P_1+(i,0): i\in[0, t-1]\}$, $P^*_2=\{P_2+(i,0):i\in[p, p+t-1]\}$, $P^*_3=\{P_3+(i,0):i\in[p, p+t-1]\}$, and $P^*_4=\{P_4+(i, 0):i\in[t, p-1]\}$.

% $(i)$:
% Now $P^*_1$, $P^*_2$, $P^*_4$ is a partition of $Y_1$.

% $(ii)$:
% Obtain $Q_1$ by stretching $T_1$ from Lemma~\ref{lem:block2} in the $e_1$ direction by $q$; in other words $Q_1=\{(1, 1), (q+1, 1), (1, 2)\}$.
% Let $Q^*_1=\{Q_1+(i, 2): i\in[0, p-1]\}$.
% Now $P^*_1$, $P^*_3$, $P^*_4$, $Q^*_1$ is a partition of $Y_2$.

See Figure~\ref{fig:layer2}
for an illustration.
\end{proof}
}

% 그림 수정
 \begin{figure}[h]
	\begin{center}
  \includegraphics[scale=1]{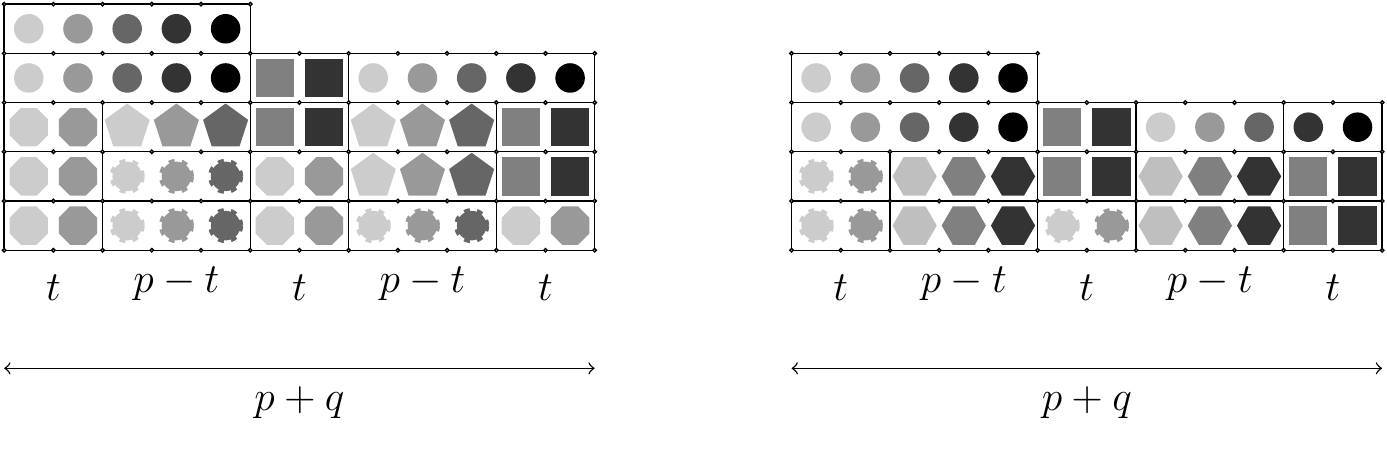}
  \caption{Figure for Lemma~\ref{lem:layer2}. Same shape and same shade means same block. Same shape and different shade means same type of block, but different block.}
  \label{fig:layer2}
	\end{center}
\end{figure}

\section{Main result}\label{sec:main}

A set $S\subset \ZZ^2$ is called a {\it layer}, and $S$ is an {\it $a$-nice layer} if $S$ is of the form $([a]\times[b])\cup\{(i,b+1):i\in[c]\}$ where $a, b, c$ are integers.
Given an element $(x, y)\in S$, the {\it row} of $(x, y)$ is the set of elements in $S$ with the same second coordinate.
Note that the sets $X_1$ and $X_2$ from Lemma~\ref{lem:layer1} are $q$-nice layers and the sets $Y_1$ and $Y_2$ from Lemma~\ref{lem:layer2} are $(p+q)$-nice layers.

For convenience, we will say a set with gap sequence $d_1, \ldots, d_{n-1}$ is a $(d_{\sigma(1)}, \ldots, d_{\sigma(n-1)})$-set for any permutation $\sigma$ of $[n-1]$.

\begin{lem}\label{lem:new}
For each $i\in[n]$, let $S_i$ be an $a$-nice layer that can be covered with height $h(S_i)$ by $V$-blocks, and let $l=\lcm{_{i\in[n]}\{h(S_i)\}}$.
For $r\geq 1-d+d\sum_{i\in[n]}|S_i|$ and positive integers $d, p, q$ with $q\geq p$, the set $\bigcup_{j\in[l]}(d[\sum_{i\in[n]}|S_i|]+(j-1)r)$ can be partitioned into
\begin{enumerate}[$(i)$]
\item $(dp, da, r)$-sets when $V=\{(pe_1, e_2, e_3)\}$.
\item $(dp, d(a-p), r)$-sets and $(dq,d(a-q),r)$-sets when $V=\{(pe_1, e_2-pe_1, e_3),(qe_1, e_2-qe_1, e_3)\}$.
\end{enumerate}
\end{lem}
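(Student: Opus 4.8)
The plan is to collapse the three-dimensional block coverings of the layers into a one-dimensional tiling through a single affine map $\phi\colon\ZZ^3\to\ZZ$, engineered so that every $V$-block is carried to a $4$-set with exactly the prescribed gap sequence. Write $M=\sum_{i\in[n]}|S_i|$. First I would raise all the coverings to the common height $l=\lcm_{i\in[n]}\{h(S_i)\}$: since $S_i\times[h(S_i)]$ is partitioned into $V$-blocks, stacking the $l/h(S_i)$ rigid $e_3$-translates of this partition (shifting $z$ by multiples of $h(S_i)$) partitions $S_i\times[l]$ into $V$-blocks, with no block straddling a seam.

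Next I would set up the linearization. On the $i$-th layer define $\ell_i(x,y)=x+a(y-1)+\sum_{k<i}|S_k|$ and put $\phi(x,y,z)=d\,\ell_i(x,y)+(z-1)r$. On a single $a$-nice layer $([a]\times[b])\cup\{(t,b+1):t\in[c]\}$, the assignment $(x,y)\mapsto x+a(y-1)$ is the base-$a$ bijection onto $[1,ab]$ on the rectangle and sends the top row bijectively onto $[ab+1,ab+c]$ (this remains a bijection even when $c>a$, as for $X_2$ with $c=q+1$), so $\ell_i$ maps $S_i$ bijectively onto the interval $[\sum_{k<i}|S_k|+1,\sum_{k\le i}|S_k|]$; letting $i$ range over $[n]$, these tile $[M]$. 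The $z$-coordinate contributes the translate $(z-1)r$, and the hypothesis $r\ge 1-d+dM$ is precisely $r>d(M-1)$, the diameter of $d[M]$; hence distinct values of $z$ produce disjoint translates and $\phi$ is a bijection from $\bigcup_{i\in[n]}(S_i\times[l])$ onto $\bigcup_{j\in[l]}(d[M]+(j-1)r)$.

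It remains to see that blocks map to the correct $4$-sets. By linearity $\phi(v_{t+1})-\phi(v_t)$ equals $\phi$ applied to the step vector $v_{t+1}-v_t$, so in case $(i)$ the three steps $pe_1,e_2,e_3$ are sent to $dp,da,r$, while in case $(ii)$ the steps of the $p$-block go to $dp,d(a-p),r$ and those of the $q$-block to $dq,d(a-q),r$. Because each of these values is strictly positive and the three consecutive differences $v_2-v_1,v_3-v_2,v_4-v_3$ of a block are a permutation of its defining triple, the four images satisfy $\phi(v_1)<\phi(v_2)<\phi(v_3)<\phi(v_4)$, and their consecutive gaps are exactly the multiset of those three values; thus the block becomes a $(dp,da,r)$-set in case $(i)$ and a $(dp,d(a-p),r)$- or $(dq,d(a-q),r)$-set in case $(ii)$. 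Unioning these $4$-sets over all blocks and layers yields the claimed partition.

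I expect the main obstacle to be this last step: arguing that the gap sequence of the image is the prescribed triple itself, rather than some rearrangement that could appear once the four points are re-sorted on the line. This succeeds precisely because $\phi$ sends all three step vectors to positive integers, forcing the image path to be monotone. In case $(i)$ the values $dp,da,r$ are manifestly positive; in case $(ii)$ one needs $a>q\ge p$ so that $d(a-p)$ and $d(a-q)$ stay positive, which holds in the intended applications, where $a=p+q$ for the layers of Lemma~\ref{lem:layer2}. The remaining verifications, namely the disjointness of the $l$ translates under the stated bound on $r$ and the bijectivity of $\ell_i$ across a wide top row, are routine given the base-$a$ computation above.
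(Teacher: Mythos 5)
Your proof is correct and takes essentially the same route as the paper's: your affine map $\phi$ is precisely the bijection $\varphi$ that the paper defines via the lexicographic ordering on $(z,i,y,x)$, and the gap computations for the steps $pe_1$, $e_2$, $e_2-pe_1$, $e_3$ coincide with the paper's. You are merely more explicit about stacking the coverings to the common height $l$ and about the positivity/monotonicity needed to read off the image gap sequence, points the paper leaves implicit.
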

\sout
{
\begin{proof}
Let $<$ be an ordering of the elements of $\{(S_i\times l, i): i\in[n]\}$ such that $((x_1, y_1, z_1), i_1)<((x_2, y_2, z_2), i_2)$
if $(a)$ $z_1<z_2$
or $(b)$ $z_1=z_2$ and $i_1<i_2$
or $(c)$ $z_1=z_2$, $i_1=i_2$, and $y_1<y_2$
or $(d)$ $z_1=z_2$, $i_1=i_2$, $y_1=y_2$, and $x_1<x_2$.
%This ordering of $[n]\times\ZZ^2$ under $<$ gives an ordering of elements of $\{h(S_i)\times S_i: i\in[n]\}$.
This ordering $<$ gives a natural bijection $\varphi$ between $\{(S_i\times l, i): i\in[n]\}$ and $\bigcup_{j\in[l]}(d[\sum_{i\in[n]}|S_i|]+(j-1)r)$.
Note that the condition $r\geq 1-d+d\sum_{i\in[n]}|S_i|$ is needed to ensure that $\varphi$ is a bijection.

$(i)$
Assume each $S_i$ can be covered by $(pe_1, e_2, e_3)$-blocks, and let $u$ and $v$ be two elements of one particular block.
If $|u-v|=pe_1$, then $|\varphi(u)-\varphi(v)|=dp$ since $u$ and $v$ are in the same row.
If $|u-v|=e_2$, then $|\varphi(u)-\varphi(v)|=da$ since the lower row of $u$ and $v$ has exactly $a$ elements.
If $|u-v|=e_3$, then $|\varphi(u)-\varphi(v)|=r$ since $u$ and $v$ must be in different layers.

Therefore, $\bigcup_{j\in[l]}(d[\sum_{i\in[n]}|S_i|]+(j-1)r)$ can be partitioned into $(dp, da, r)$-sets.

$(ii)$
Assume each $S_i$ can be covered by $\{(pe_1, e_2-pe_1, e_3),(qe_1,e_2-qe_1,e_3)\}$-blocks, and let $u$ and $v$ be two elements of a $(pe_1,e_2-pe_1,e_3)$-block.
If $|u-v|=pe_1$, then $|\varphi(u)-\varphi(v)|=dp$ since $u$ and $v$ are in the same row.
If $|u-v|=e_2-pe_1$, then $|\varphi(u)-\varphi(v)|=d(a-p)$ since the lower row of $u$ and $v$ has exactly $a$ elements.
If $|u-v|=e_3$, then $|\varphi(u)-\varphi(v)|=r$ since $u$ and $v$ must be in different layers.
The case when $u$ and $v$ are two elements of a $(qe_1,e_2-qe_1,e_3)$-block is analogous.

Therefore, $\bigcup_{j\in[l]}(d[\sum_{i\in[n]}|S_i|]+(j-1)r)$ can be partitioned into $(dp, d(a-p),r)$-sets and $(dq, d(a-q), r)\}$-sets.
\end{proof}
}

\begin{lem}\label{lem:new2}
%Let both $r_1$ and $r_2$ be linear combinations of positive integers $p$ and $q$, and let $d=gcd(p,q)$.
Given positive integers $r_1$ and $r_2$, let $d=\gcd(r_1, r_2)$, and also let $p$ and $q$ be positive integers such that $q\geq p$ and $p/d$ and $q/d$ are also integers.
For all integers $r\geq d(r_1/d-1)(r_2/d-1)$,
there is an interval of $\ZZ$ that can be partitioned into $(p, q, r)$-sets
if $L_1$ and $L_2$ is a layer of size $r_1/d$ and $r_2/d$, respectively, and both $L_1$ and $L_2$ are
%If the set of integers can be partitioned into both $(p/d,q/d,r_1/d)$-sets and $(p/d, q/d, r_2/d)$-sets,
\begin{enumerate}[$(i)$]
\item
 $(q/d)$-nice layers that can be covered by $(pe_1/d, e_2,e_3)$-blocks.

\item
$(p/d+q/d)$-nice layers that can be covered by $\{(pe_1/d, e_2-pe_1/d,e_3),(qe_1/d, e_2-qe_1/d, e_3)\}$-blocks.
\end{enumerate}
\end{lem}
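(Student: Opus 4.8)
The plan is to tile an interval of the form $[1,Wr]$, where $W=\lcm\{h(L_1),h(L_2)\}$, by applying Lemma~\ref{lem:new} once for each residue class modulo $d$ and then gluing the outputs together. Write $m_1=r_1/d$ and $m_2=r_2/d$, so that $\gcd(m_1,m_2)=1$ and $|L_i|=m_i$. I would view $[1,Wr]$ as a stack of $W$ consecutive windows $[(j-1)r+1,\,jr]$, each of length $r$. The key structural point is that the largest gap $r$ is exactly the spacing between the consecutive copies in the union $\bigcup_{j\in[l]}\big(d[\,\cdot\,]+(j-1)r\big)$ produced by Lemma~\ref{lem:new}: a fixed within-window position is repeated across all $l$ copies at spacing $r$, so each $(p,q,r)$-set produced sits in one such ``vertical column'' and the gluing across windows is automatic. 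It therefore suffices to tile the positions inside a single window.

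Within one window the $r$ positions split, according to residue modulo $d$, into $d$ arithmetic progressions of common difference $d$; if $s=r-d\lfloor r/d\rfloor$, then $s$ of them have length $\lceil r/d\rceil$ and the other $d-s$ have length $\lfloor r/d\rfloor$. This is where the hypothesis $r\geq d(m_1-1)(m_2-1)$ enters: it forces $\lfloor r/d\rfloor\geq (m_1-1)(m_2-1)$, so both $\lfloor r/d\rfloor$ and $\lceil r/d\rceil=\lfloor r/d\rfloor+1$ lie at or above the Frobenius threshold of the coprime pair $(m_1,m_2)$. By the Chicken McNugget (Sylvester) theorem, each of these two lengths can be written as $a_1m_1+a_2m_2$ with $a_1,a_2\in\ZZ_{\geq 0}$.

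Fixing a residue class $\rho$, let $n_\rho\in\{\lfloor r/d\rfloor,\lceil r/d\rceil\}$ be its number of positions in a window, together with a representation $n_\rho=a_1m_1+a_2m_2$. I would feed Lemma~\ref{lem:new} (part $(i)$ or $(ii)$ according to which hypothesis on $L_1,L_2$ holds) with $a_1$ copies of $L_1$ and $a_2$ copies of $L_2$; since $d\,n_\rho\leq d\lceil r/d\rceil\leq r+d-1$, the hypothesis $r\geq 1-d+d\,n_\rho$ of Lemma~\ref{lem:new} is met, and it returns a partition of $\bigcup_{j\in[l_\rho]}\big(d[n_\rho]+(j-1)r\big)$ into $(p,q,r)$-sets with $l_\rho\mid W$. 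Translating this configuration by $\rho-d$ places the positions it covers precisely on those congruent to $\rho$ modulo $d$ in every window, and since each such comb is self-contained in the $e_3$-direction, stacking its $W/l_\rho$ translates by $0,\,l_\rho r,\,2l_\rho r,\dots$ extends it to all $W$ windows while remaining a partition into $(p,q,r)$-sets.

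Taking the union over the $d$ residue classes produces $d$ disjoint families whose columns together exhaust every window $[(j-1)r+1,jr]$, so they partition the interval $[1,Wr]$ into $(p,q,r)$-sets, as required. I expect the main obstacle — and the reason for the precise threshold $d(r_1/d-1)(r_2/d-1)$ — to be the passage from $d=1$, where a single Frobenius representation of $r$ and one application of Lemma~\ref{lem:new} already yield an interval, to $d>1$, where one must instead cover each residue class separately: this succeeds exactly because the bound makes the two nearly-equal column counts $\lfloor r/d\rfloor$ and $\lceil r/d\rceil$ \emph{simultaneously} representable by $m_1,m_2$. The remaining bookkeeping is to reconcile the various window heights $l_\rho$ into the common height $W$, which the $\lcm$ choice together with the self-containedness of each comb handles.
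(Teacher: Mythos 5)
Your proposal is correct and follows essentially the same route as the paper's proof: one application of Lemma~\ref{lem:new} per residue class modulo $d$, with column counts $\lfloor r/d\rfloor$ and $\lceil r/d\rceil$ shown to be representable as nonnegative combinations of $r_1/d$ and $r_2/d$ via the Sylvester--Frobenius bound, and the $d$ translated combs glued into the interval $[1,lr]$ (your extra remark about reconciling the heights $l_\rho$ via stacking addresses a point the paper leaves implicit in its choice of $l=\lcm\{h(L_1),h(L_2)\}$).
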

%\sout
{
\begin{proof}
Let $l=\lcm\{h(L_1),h(L_2)\}$.
An integer $s$ is {\it good} if $(r_1/d-1)(r_2/d-1)\leq s\leq {\frac{r-1+d}{ d}}$.
Note that a good integer $s$ satisfies $r\geq 1-d+ds$.
% and fix $r$.
%Since $r_1/d$ and $r_2/d$ are coprime and $r\geq d(r_1/d-1)(r_2/d-1)\geq (r_1/d-1)
Since $r_1/d$ and $r_2/d$ are coprime, a good $s$ can be expressed as a linear combination of $r_1/d$ and $r_2/d$ with nonnegative coefficients.
%to obtain that %for any $s\in[(r_1/d-1)(r_2/d-1), r]$,
Therefore, given a good $s$, the set $T(s)=\bigcup_{j\in[l]}(d[s]+(j-1)r)$ can be partitioned into $(p, q, r)$-sets by Lemma~\ref{lem:new} since $(p, q, r)$-sets are also $(q, p, r)$-sets, for both $(i)$ and $(ii)$.

%For integers $j$ and $t$, let $T_j(t)=T(t)+j$ so that $T_j(t)$ is a copy of $T(t)$ shifted by $j$.
%Since $r\geq d(r_1/d-1)(r_2/d-1)$, we know that $\lfloor{r\over d}\rfloor\geq (r_1/d-1)(r_2/d-1)$.
Let $r'=r-d\lfloor{r/d}\rfloor$ so that $r'\in[0,d-1]$.
If $r'\neq 0$, then both $\lfloor{r\over d}\rfloor$ and $\lfloor{r\over d}\rfloor+1$ are good, and therefore by the above paragraph, both $T(\lfloor{r\over d}\rfloor)$ and $T(\lfloor{r\over d}\rfloor+1)$ can be partitioned into $(p, q, r)$-sets.
Now, $\bigcup_{i\in[r']}(T(\lfloor{r\over d}\rfloor+1)+i)\cup\bigcup_{i\in[r'+1,d-1]}(T(\lfloor{r\over d}\rfloor)+i)=[lr]+d$.

If $r'= 0$, then $\lfloor{r\over d}\rfloor$ is good, and therefore by the first paragraph, $T(\lfloor{r\over d}\rfloor)$ can be partitioned into $(p, q, r)$-sets.
Now, $\bigcup_{i\in[1,d-1]}(T(\lfloor{r\over d}\rfloor)+i)=[lr]+d$.

In both cases, $[lr]+d$ can be partitioned into $(p, q, r)$-sets.
\end{proof}
}

%\begin{lem}
%%Let both $r_1$ and $r_2$ be linear combinations of positive integers $p$ and $q$, and let $d=gcd(p,q)$.
%For positive integers $r_1$ and $r_2$, let $d=\gcd(r_1, r_2)$.
%If there are $(q/d, b, c)$-layers $L_1$ and $L_2$ of size $r_1/d$ and $r_2/d$, respectively, that can be covered by $(pe_1/d, e_2, e_3)$-blocks,
%%If the set of integers can be partitioned into both $(p/d,q/d,r_1/d)$-sets and $(p/d, q/d, r_2/d)$-sets,
%then there is an interval of $\ZZ$ that can be partitioned into $(p, q, r)$-sets for all integers $r\geq d(r_1/d-1)(r_2/d-1)$.
%\end{lem}
%\sout
%{
%\begin{proof}
%Note that $r_1/d$ and $r_2/d$ are coprime.
%Let $l=lcm\{h(L_1),h(L_2)\}$ and fix $r$.
%
%Since $r_1/d$ and $r_2/d$ are coprime and $r\geq d(r_1/d-1)(r_2/d-1)\geq (r_1/d-1)(r_2/d-1)$, we know that $r$ can be expressed as a linear combination of $r_1/d$ and $r_2/d$ with nonnegative coefficients.
%Since $L_1$ and $L_2$ is a $(q/d, b, c)$-layer of size $r_1/d$ and $r_2/d$, respectively, by Lemma~\ref{lem:new}, we know that for any $s\in[(r_1/d-1)(r_2/d-1), r]$, the set $T(s)=\bigcup_{j\in[l]}(d[s]+(j-1)r)$ can be partitioned into $(p, q, r)$-sets.
%
%Let $r'\in[0, d]$ so that $r=dr_0+r'$.
%For integers $j$ and $t$, let $T_j(t)=T(t)+j$ so that $T_j(t)$ is a copy of $T(t)$ shifted by $j$.
%Now, $\bigcup_{i\in[r']}T_i(s+1)\cup\bigcup_{i\in[r',d-1]}T_i(s)=[d,ds+d]$.
%Therefore, $[d, ds+d]$ can be partitioned into $(p, q, r)$-sets.
%\end{proof}
%}

\begin{thm}\label{thm:big}
For positive integers $p, q$ with $q\geq2p$, if $r\geq 4q(4q-1)$, then there is an interval of $\ZZ$ that can be partitioned into $4$-sets of the same gap sequence $p, q, r$.
\end{thm}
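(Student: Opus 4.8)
The plan is to apply Lemma~\ref{lem:new2}$(i)$ with the layers furnished by Lemma~\ref{lem:layer1}, choosing the parameters so that the resulting blocks have precisely the gap sequence $p,q,r$. Since $q\geq 2p$, Lemma~\ref{lem:layer1} tells us that $X_1=[q]\times[4]$ and $X_2=([q]\times[4])\cup\{(q+1,4)\}$ are $q$-nice layers coverable by $(pe_1,e_2,e_3)$-blocks, each with height $20$. These are exactly the hypotheses of Lemma~\ref{lem:new2}$(i)$ once we take $d=1$, so that the blocks become $(p,q,r)$-sets (the second coordinate of a $q$-nice layer has $a=q$, and the $(pe_1,e_2,e_3)$-block yields gaps $dp=p$, $da=q$, and $r$). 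With $d=1$ the coprimality requirement of Lemma~\ref{lem:new2} forces the two layer sizes $r_1=|X_1|$ and $r_2=|X_2|$ to be coprime, which is automatic here: $|X_1|=4q$ and $|X_2|=4q+1$ are consecutive integers.

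First I would set $L_1=X_1$ and $L_2=X_2$, so $r_1=4q$ and $r_2=4q+1$, and observe $d=\gcd(4q,4q+1)=1$; the divisibility conditions $p/d,q/d\in\ZZ$ hold trivially. Both layers are $q$-nice and coverable by $(pe_1,e_2,e_3)$-blocks by Lemma~\ref{lem:layer1}, so hypothesis $(i)$ of Lemma~\ref{lem:new2} is satisfied. The threshold in Lemma~\ref{lem:new2} becomes $r\geq d(r_1/d-1)(r_2/d-1)=(4q-1)(4q)=4q(4q-1)$, which is exactly the bound in the statement of Theorem~\ref{thm:big}. Lemma~\ref{lem:new2} then produces an interval of $\ZZ$ partitioned into $(p,q,r)$-sets, i.e.\ $4$-sets with gap sequence $p,q,r$, completing the proof.

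The only point requiring a little care is matching the combinatorial output of Lemma~\ref{lem:new2} to the desired gap sequence. Lemma~\ref{lem:new2}$(i)$ delivers $(p,q,r)$-sets directly (using $d=1$ and $a=q$), so no reconciliation between $(p,q,r)$ and $(q,p,r)$ is even needed, and the convention that a set with a given gap multiset is a $(d_{\sigma(1)},\dots,d_{\sigma(n-1)})$-set for any permutation $\sigma$ guarantees these are genuinely sets of gap sequence $p,q,r$. Thus the proof is essentially a direct substitution of the right layers into the right lemma.

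I expect no serious obstacle: the real content has already been discharged by the two-dimensional covering arguments in Lemma~\ref{lem:layer1} and the bijective/linear-combination bookkeeping in Lemmas~\ref{lem:new} and~\ref{lem:new2}. The main thing to verify is simply that the chosen layer sizes $4q$ and $4q+1$ are coprime and that the arithmetic threshold $(r_1/d-1)(r_2/d-1)$ collapses to the stated $4q(4q-1)$; both are immediate. The regime $q\geq 2p$ is needed only to invoke Lemma~\ref{lem:layer1}, whose proof relied on $a=\lfloor q/p\rfloor\geq 2$.
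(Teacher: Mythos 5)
Your proposal is correct and follows exactly the paper's own proof: both take $L_1=X_1$ and $L_2=X_2$ from Lemma~\ref{lem:layer1}, note $\gcd(4q,4q+1)=1$, and apply Lemma~\ref{lem:new2}$(i)$ to get the threshold $4q(4q-1)$. No differences worth noting.
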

\begin{proof}
Since $q\geq 2p$, the layer $X_1$ and $X_2$ in Lemma~\ref{lem:layer1} is a $q$-nice layer of size $4q$ and $4q+1$, respectively, that can be covered by $(pe_1, e_2, e_3)$-blocks.
Note that $\gcd(4q, 4q+1)=1$.
Thus, by Lemma~\ref{lem:new2}, there is an interval of $\ZZ$ that can be partitioned into $(p, q, r)$-sets for all integers $r\geq 4q(4q-1)$.
\end{proof}

\begin{thm}\label{thm:small}
For positive integers $p, q$ with $q\in[p,2p]$, if $r\geq {1\over\gcd(p, q)}({5p+4q}-\gcd(p,q))({4p+3q}-\gcd(p,q))$, then there is an interval of $\ZZ$ that can be partitioned into $4$-sets of the same gap sequence $p, q, r$.
\end{thm}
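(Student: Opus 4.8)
The plan is to mimic the proof of Theorem~\ref{thm:big}, but using Lemma~\ref{lem:layer2} in place of Lemma~\ref{lem:layer1} and part $(ii)$ of Lemma~\ref{lem:new2} in place of part $(i)$. Since $q\in[p,2p]$, Lemma~\ref{lem:layer2} gives us two explicit $(p+q)$-nice layers that are covered by $\{(pe_1,e_2-pe_1,e_3),(qe_1,e_2-qe_1,e_3)\}$-blocks: the layer $Y_1$ of size $4(p+q)+p=5p+4q$ and the layer $Y_2$ of size $3(p+q)+p=4p+3q$. These will play the role of $L_1$ and $L_2$.

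First I would set $d=\gcd(p,q)$ and verify that this $d$ is compatible with the hypotheses of Lemma~\ref{lem:new2}$(ii)$. I need sizes $r_1,r_2$ with $\gcd(r_1,r_2)=d$ and with $r_1/d$, $r_2/d$ coprime, so that $Y_1,Y_2$ have sizes $r_1/d$ and $r_2/d$ respectively. The natural choice is $r_1=(5p+4q)$ and $r_2=(4p+3q)$, viewed after dividing through by $d$. The key arithmetic fact to check is that $\gcd(5p+4q,\,4p+3q)=\gcd(p,q)=d$: indeed $(5p+4q)-(4p+3q)=p+q$ and $(4p+3q)-(p+q)=3p+2q$, and repeated subtraction via the Euclidean algorithm reduces $\gcd(5p+4q,4p+3q)$ to $\gcd(p,q)$. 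I also need $p/d$ and $q/d$ to be integers, which holds by definition of $d$. Granting these, $L_1=Y_1$ and $L_2=Y_2$ satisfy all the hypotheses of Lemma~\ref{lem:new2}$(ii)$, since both are $(p/d+q/d)$-nice layers (after dividing the ambient coordinates by $d$) covered by the required stretched blocks.

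Then Lemma~\ref{lem:new2}$(ii)$ immediately produces an interval of $\ZZ$ partitioned into $(p,q,r)$-sets for every integer
\[
r\geq d\left(\tfrac{r_1}{d}-1\right)\left(\tfrac{r_2}{d}-1\right)
 =\frac{1}{d}\bigl(r_1-d\bigr)\bigl(r_2-d\bigr)
 =\frac{1}{\gcd(p,q)}\bigl(5p+4q-\gcd(p,q)\bigr)\bigl(4p+3q-\gcd(p,q)\bigr),
\]
which is exactly the stated threshold. This matches the bound in the theorem, so no slack is lost.

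The only genuine obstacle is the $\gcd$ computation: I must confirm that $\gcd(5p+4q,4p+3q)$ really equals $\gcd(p,q)$, since the coprimality of $r_1/d$ and $r_2/d$ is precisely what lets Lemma~\ref{lem:new2} express every large enough multiple-of-$d$ target as a nonnegative combination of the two layer sizes. Everything else is bookkeeping: checking that $Y_1$ and $Y_2$ are genuinely $(p+q)$-nice (they have the form $([a]\times[b])\cup\{(i,b+1):i\in[c]\}$ with $a=p+q$, as noted after Lemma~\ref{lem:layer2}), and that dividing the block vectors by $d$ keeps them integral, which again follows from $d\mid p$ and $d\mid q$. With the $\gcd$ identity in hand, the theorem follows directly from Lemma~\ref{lem:new2}$(ii)$.
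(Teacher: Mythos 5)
Your proposal is correct and follows essentially the same route as the paper: take $L_1=Y_1$ and $L_2=Y_2$ from Lemma~\ref{lem:layer2} (applied with $p/d$, $q/d$), note $\gcd(5p+4q,4p+3q)=\gcd(p,q)$, and invoke Lemma~\ref{lem:new2}$(ii)$ to get exactly the stated threshold. The $\gcd$ verification and the rescaling bookkeeping you flag are precisely the (routine) points the paper's proof also relies on.
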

\begin{proof}
Since $q\in[p, 2p]$, the layer $Y_1$ and $Y_2$ in Lemma~\ref{lem:layer2} is a $(p+q)$-nice layer of size $5p+4q$ and $4p+3q$, respectively, that can be covered by $\{(pe_1, e_2-pe_1, e_3), (qe_1, e_2-qe_1, e_3)\}$-blocks.
Note that $\gcd(5p+4q, 4p+3q)=\gcd(p,q)$.
Thus, by Lemma~\ref{lem:new2}, there is an interval of $\ZZ$ that can be partitioned into $(p, q, r)$-sets for all integers $r\geq\gcd(p, q)({5p+4q\over\gcd(p,q)}-1)({4p+3q\over\gcd(p,q)}-1)$.
\end{proof}

%\begin{thmmain}
%\mainstatement
%\end{thmmain}
%\begin{proof}
Theorem~\ref{thm:main} follows directly from Theorem~\ref{thm:big} and Theorem~\ref{thm:small}.
%\end{proof}

\section{Future directions and open questions}\label{sec:open}

As noted in the introduction, we omit some improvements on the constants of the threshold on $r$ in Theorem~\ref{thm:main}.
For example, it is not hard to show that $([ q]\times[4])\cup\{(q+j, 4):j\in[i]\}$ can be covered by $(pe_1,e_2,e_3)$-blocks for all $i\in[0,p]$, but we only provided the proof when $i\in\{0, 1\}$.
Finding more blocks in Lemma~\ref{lem:block1} and Lemma~\ref{lem:block2} will help finding more layers that can be covered in Lemma~\ref{lem:layer1} and Lemma~\ref{lem:layer2}, and appropriate combinations will improve the constants on the threshold on $r$.

\bigskip

We approached Question~\ref{ques} with the mind set of allowing all gap sequences, but focusing on the case when $n=4$, which is the first open case.
Another approach is to investigate the question for all $n$, but for special gap sequences.
The following conjecture was explicitly made in~\cite{2005Na}:

\begin{conj}[\cite{2005Na}]
There is an interval of $\ZZ$ that can be partitioned into $(k+l+1)$-sets with the same gap sequence $p_1, \ldots, p_k, q_1, \ldots, q_l$ where $p_1=\cdots=p_k$ and $q_1=\cdots=q_l$.
\end{conj}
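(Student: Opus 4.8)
The plan is to extend the lattice method of Section~\ref{sec:lemmas} and Section~\ref{sec:main} from $4$-sets to $(k+l+1)$-sets. Write $p$ for the common value of the $p_i$ and $q$ for that of the $q_j$, and call a $(k+l+1)$-set with $k$ gaps equal to $p$ and $l$ gaps equal to $q$ a \emph{$(p^{k},q^{l})$-set}. The crucial structural difference from Theorem~\ref{thm:main} is that the target gap sequence now has only two distinct values and, in particular, no large third gap $r$. Consequently the whole construction must take place in $\ZZ^2$ rather than $\ZZ^3$: a block may not use the $e_3$ direction, since by the proof of Lemma~\ref{lem:new} every $e_3$-step is forced to become the large layer-spacing in the image, which here is not an allowed gap. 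For the same reason one can work with a single layer, with $d=1$ and height $1$, so the auxiliary machinery simplifies considerably; in particular the Frobenius/gcd coverage argument of Lemma~\ref{lem:new2}, whose role was to realize \emph{all} sufficiently large $r$, is no longer needed, since we only have to produce one tiled interval for the given $p,q,k,l$.

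First I would fix the row length $a=p+q$ and use the row-major bijection $\varphi$ underlying Lemma~\ref{lem:new}, specialized to a single planar layer with $d=1$. Under $\varphi$ the horizontal step $pe_1$ has gap $p$ and the diagonal step $e_2-pe_1$ has gap $q$ (and symmetrically $qe_1$, $e_2-qe_1$ give $q$, $p$), exactly as in Lemma~\ref{lem:layer2}. The size-$4$ restriction plays no role in the proof of Lemma~\ref{lem:new}, so the same bijection yields: if some $(p+q)$-nice layer $S$ can be covered by \emph{planar} $(k+l+1)$-blocks whose gap-vector multiset consists of $k$ vectors of $\varphi$-gap $p$ and $l$ of $\varphi$-gap $q$, \emph{and} whose defining path is monotone (each step has positive $\varphi$-gap, hence lies in $\{(p,0),(-q,1)\}$ for a $p$-gap and in $\{(q,0),(-p,1)\}$ for a $q$-gap), then the image interval is partitioned into $(p^{k},q^{l})$-sets. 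Monotonicity is what guarantees that each image block is visited in increasing order, so that its sorted gap sequence is exactly $p^{k}q^{l}$. Thus the conjecture for fixed $p,q,k,l$ reduces to the purely two-dimensional statement that $[p+q]\times[b]$, for some height $b$ chosen with $(k+l+1)\mid(p+q)b$, admits such a staircase-block tiling; equivalently, wrapping the strip into the cylinder $\ZZ_{p+q}\times\ZZ$, that this cylinder admits a periodic block tiling.

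The main obstacle is precisely this two-dimensional construction, uniformly in all four parameters. One natural route is a lattice/periodic tiling: exhibit a single staircase block $B$ that is a complete set of coset representatives for an index-$(k+l+1)$ sublattice $\Lambda\ni(p+q)e_1$, so that $B\oplus\Lambda=\ZZ^2$ descends to the cylinder. A single translational tile cannot suffice in general, however: already for $k=l=1$ the set $\{0,p,p+q\}$ is not an arithmetic progression when $p\neq q$, so by the translational criterion recalled in the introduction it tiles $\ZZ$ only once reflections are allowed. One is therefore forced to interlock several differently oriented blocks, which points to the second route, an inductive brick-laying construction mirroring how Lemma~\ref{lem:layer1} and Lemma~\ref{lem:layer2} assemble large layers from the few explicit blocks of Lemma~\ref{lem:block1} and Lemma~\ref{lem:block2}: build the $(p^{k},q^{l})$-tiling from a $(p^{k},q^{l-1})$- or $(p^{k-1},q^{l})$-tiling by absorbing one extra gap per block, with the $3$-set case ($k=l=1$) and the arithmetic-progression case ($l=0$) as bases.

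I expect the genuine difficulty to be the very one that forces the two separate regimes $q\ge 2p$ and $p\le q\le 2p$ and the lengthy explicit casework of Lemmas~\ref{lem:block1}--\ref{lem:layer2} even for $k+l+1=4$: the admissible staircase shapes, and hence the covering, depend delicately on the arithmetic relationship between $p$ and $q$ (through residues modulo $p+q$ and through $\gcd(p,q)$) as well as on $k$ and $l$ (through the block size and the row balance a tiling must satisfy), and no single closed-form staircase tiling appears to cover all regimes at once. A sensible first milestone, already beyond Theorem~\ref{thm:main}, is to settle the two-gap $4$-set cases $p^{2}q$ and $pq^{2}$ for \emph{all} $p$ and $q$ rather than only for a large third gap; a clean construction there would indicate which inductive step, if any, propagates to general $k$ and $l$.
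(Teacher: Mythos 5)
First, a point of order: the paper does not prove this statement. It is quoted verbatim from \cite{2005Na} as an open conjecture; the surrounding text says only that partial results appear in \cite{2005Na} and that the case $k=l=1$ is equivalent to the already-known $n=3$ case of Question~\ref{ques}. So there is no proof in the paper to compare yours against, and any complete argument would go beyond what the authors claim.

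Second, your proposal is not a complete argument either, and you essentially say so yourself. The reduction you describe --- drop the $e_3$ direction, work in a single planar layer of width $a=p+q$ with $d=1$, and use the row-major bijection $\varphi$ of Lemma~\ref{lem:new} so that the steps $pe_1$, $qe_1$, $e_2-pe_1$, $e_2-qe_1$ realize gaps $p$, $q$, $q$, $p$ respectively (with the monotonicity condition guaranteeing that the sorted gap multiset of each image block is exactly $p^kq^l$) --- is a sound and faithful extension of part $(ii)$ of Lemma~\ref{lem:new}. But it only translates the conjecture into an equivalent two-dimensional statement: that $[p+q]\times[b]$, equivalently the cylinder $\ZZ_{p+q}\times\ZZ$, can be tiled by monotone staircase $(k+l+1)$-blocks with $k$ steps of $\varphi$-gap $p$ and $l$ steps of $\varphi$-gap $q$, uniformly in $p,q,k,l$. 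You never construct such a tiling: the analogues of Lemmas~\ref{lem:block1}--\ref{lem:layer2} are exactly what is missing, you correctly note that a single translational tile cannot work and that the construction must depend delicately on the arithmetic of $p$ and $q$, and you end by proposing a ``first milestone'' rather than a proof. Since the bijection $\varphi$ is the easy half of the paper's method and the explicit block constructions are the hard half, the core of the conjecture is left untouched. What you have is a reasonable research plan with a genuine gap at its center; the statement remains open.
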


The truth of Question~\ref{ques} when $n=3$ is equivalent to this conjecture when $k=l=1$.
Some partial results on this conjecture were made in~\cite{2005Na}.

\bigskip

As mentioned in the introduction, Gordon~\cite{1980Go} investigated the question in higher dimensions.
We iterate some open questions for the $2$-dimensional case.
As it is known that there is a $6$-set of $\ZZ^2$ that does not tile $\ZZ^2$ under the Euclidean group actions, the following statement is stated as ``conceivable'' in~\cite{1980Go}:

\begin{ques}[\cite{1980Go}]
Does every set $S$ of $\ZZ^2$ with $|S|\leq 5$ tile $\ZZ^2$ under the Euclidean group actions?
\end{ques}

Gordon~\cite{1980Go} also proved that a $3$-set of $\ZZ^2$ tiles $\ZZ^+\times\ZZ$ under the Euclidean group actions, whereas there is a $4$-set of $\ZZ^2$ that does not.
Actually, the same $4$-set does not even tile $\ZZ^+\times\ZZ^+$ under the Euclidean group actions, but Gordon~\cite{1980Go} proved that every $2$-set does tile $\ZZ^+\times\ZZ^+$ under the Euclidean group actions.
To the authors' knowledge, the following question, which appeared in~\cite{1980Go}, is still open:

\begin{ques}[\cite{1980Go}]
Does every $3$-set of $\ZZ^2$ tile $\ZZ^+\times\ZZ^+$ under the Euclidean group actions?
\end{ques}

\section*{Acknowledgments}

The authors thank Jae Baek Lee for introducing the problem to the authors.

\bibliographystyle{alpha}

%\bibliography{integer_partition}

%\begin{thebibliography}{99}
%
%\end{thebibliography}

\end{document}